\documentclass[11pt]{article}
\usepackage[a4paper,centering,scale=0.85]{geometry}

\usepackage{stix}
\usepackage{xcolor}
\usepackage{graphicx}
\usepackage{amsmath,amsthm}
\usepackage{amsfonts}
\usepackage{fixmath}

\newtheorem{theorem}{Theorem}[section]
\newtheorem{lemma}{Lemma}[section]

\theoremstyle{definition}
\newtheorem{remark}{Remark}[section]
\newtheorem{example}{Example}[section]
\usepackage{hyperref}
\hypersetup{colorlinks}

\usepackage{algorithm,algorithmicx,algpseudocode}
\usepackage{subfig}
%

\def\wtd{\widetilde}
\def\what{\widehat}
\DeclareMathOperator{\opt}{opt}
\DeclareMathOperator{\rank}{rank}
\DeclareMathOperator{\subspan}{span}
\DeclareMathOperator{\diag}{diag}
\DeclareMathOperator{\HH}{H}

\def\range{{\cal R}}
\def\krylov{{\cal K}}
\def\interval{{\cal I}}
\def\domain{{\cal D}}

\DeclareMathOperator{\eSD}{o}
\DeclareMathOperator{\SD}{SD}
\DeclareMathOperator{\LOCG}{LOCG}
\DeclareMathOperator{\inertia}{inertia}
\DeclareMathOperator{\pinv}{\dagger}
\DeclareMathOperator{\diff}{d\!}

\numberwithin{algorithm}{section}

\allowdisplaybreaks

\title{Convergence Analysis of Extended LOBPCG for Computing Extreme Eigenvalues}

\author{
	Peter Benner\thanks{
		Max Planck Institute for Dynamics of Complex Technical Systems,
		Sandtorstra{\ss}e 1, 39106 Magdeburg, Germany.
		E-mail: {\tt benner@mpi-magdeburg.mpg.de}.
	}
	\and
	Xin Liang\thanks{
		Yau Mathematical Sciences Center,
		Tsinghua University,
		Beijing 100084, China.
		E-mail: {\tt liangxinslm@tsinghua.edu.cn}.
	}
}
\date{\today}

\begin{document}

\maketitle

\begin{abstract}
	This paper is concerned with the convergence analysis of an extended variation of the locally optimal preconditioned conjugate gradient method (LOBPCG)
	for the extreme eigenvalue of a Hermitian matrix polynomial which admits some extended form of Rayleigh quotient.
	This work is a generalization of the analysis by Ovtchinnikov ({\em SIAM J. Numer. Anal.}, 46(5):2567--2592, 2008).
	As instances, the algorithms for definite matrix pairs and hyperbolic quadratic matrix polynomials are shown to be globally convergent and to have an asymptotically local convergence rate.
	Also, numerical examples are given to illustrate the convergence.
\end{abstract}

\smallskip
{\bf Keywords.} Extreme eigenvalue, convergence rate, LOBPCG, definite matrix pencil, hyperbolic quadratic eigenvalue problem

\smallskip
{\bf AMS subject classifications.} 65F15, 65H17.

\section{Introduction}\label{sec:intro}
Given a Hermitian matrix polynomial
\begin{equation}\label{eq:Fpoly}
	F(\lambda)=\sum_{k=0}^mA_k\lambda^{m-k},
\end{equation}
of degree $m$, where $A_k\in\mathbb{C}^{n\times n}$ for $k=0,\ldots,m$,
and an interval $\interval=(\lambda_-,\lambda_+)$.
Suppose $F(\lambda_-)$ is negative definite.
For some nonzero $x\in\mathbb{C}^n$,
consider the equation
\begin{equation}\label{eq:quadrik}
x^{\HH}F(\lambda)x=0, \quad \lambda\in\interval.
\end{equation}
Let $\domain$ denote the set of all $x$ for which \eqref{eq:quadrik} has at lease one root $\lambda=\rho(x)$ in $\interval$,
while $\mathbb{C}^n\setminus\domain$ is the set of all $x$ for which \eqref{eq:quadrik} has no root in $\interval$.
For any $x\in \domain$, define
\[
	\sigma(x):=x^{\HH}F'(\rho(x))x=\sum_{k=0}^{m-1}(m-k)A_k\rho(x)^{m-k-1}.
\]
Suppose that $\sigma(x)>0$ for any $x\in\domain$.
Then \eqref{eq:quadrik} has only one root in $\interval$,
which is called the Rayleigh quotient of $F(\lambda)$ at $x$.
Suppose the matrix polynomial $F(\lambda)$ has $\ell$ eigenvalues in $\interval$, namely $\lambda_1\le\dots\le\lambda_\ell$,
while for any matrix $X\in\mathbb{C}^{n\times k}$ with a proper constraint, the projected polynomial $X^{\HH}F(\lambda)X$ has $\ell_X\le\ell$ eigenvalues in $\interval$, namely $\lambda_{1,X}\le\dots\le\lambda_{\ell_X,X}$.
Furthermore, suppose the eigenvalues of $F(\lambda)$ admit min-max principles, such as
\begin{enumerate}
	\item the \emph{Wielandt-Lidskii min-max principle}:
		\begin{equation} \label{eq:wielandt:minmax}
			\min_{
				\begin{subarray}{c}
					{\cal X}_1\subset\dots\subset{\cal X}_{k} \\
					\dim {\cal X}_j=i_j
				\end{subarray}
			}\,\,
			\max_{
				\begin{subarray}{c}
					x_j\in{\cal X}_j\\
					X=[x_1,\dots,x_{k}] \\
					\rank(X)=k\\
					\text{proper $X$}
				\end{subarray}
			}
			\sum_{j=1}^k\lambda_{j,X}=\sum_{j=1}^k\lambda_{i_j};
		\end{equation}
	\item the \emph{Courant-Fischer min-max principle} obtained by setting $k=1$ in \eqref{eq:wielandt:minmax} and noticing $\rho(x)=\lambda_{1,X}$:
		\begin{equation} \label{eq:courant-fischer:minmax}
			\min_{
				\begin{subarray}{c}
					\dim {\cal X}=i
				\end{subarray}
			}\,\,
			\max_{
				\begin{subarray}{c}
					x\in{\cal X}\\
					\text{proper $x$}
				\end{subarray}
			}
			\rho(x)=\lambda_{i};
		\end{equation}
	\item the \emph{Fan trace min principle} obtained by setting $i_j=j$ in \eqref{eq:wielandt:minmax}:
		\begin{equation}\label{eq:trace:min}
			\min_{
				\begin{subarray}{c}
					\rank(X)=k\\
					\text{proper $X$}
				\end{subarray}
			}\sum_{j=1}^k\lambda_{j,X}= \sum_{j=1}^k\lambda_j; 	
		\end{equation}
	\item or the extreme eigenvalue characterization obtained by setting $i=1$ in \eqref{eq:courant-fischer:minmax} or $k=1$ in \eqref{eq:trace:min}:
		\begin{equation}\label{eq:rayleighquotient:min}
			\min_{
				\begin{subarray}{c}
					\text{proper $x$}
				\end{subarray}
			}\rho(x)= \lambda_1, 	
		\end{equation}
\end{enumerate}
where the phrase ``proper $X$'' in the min/max means that the minimum/maximum is obtained under some proper constraint.

These min-max principles motivate us to use the Rayleigh-Ritz procedure and gradient-type optimization methods,
such as the steepest descent method (SD) or the conjugate gradient method (CG), to obtain several smallest eigenvalues and their corresponding eigenvectors.
In this view, the \emph{locally optimal block preconditioned (extended) conjugate gradient method} (LOBP(e)CG) has been developed to solve some kinds of eigenvalue problems.
Locally optimal CG for nonlinear optimization was first described by Takahashi \cite{taka:65}.
Later, Knyazev \cite{knya:01} established LOBPCG for the generalized Hermitian eigenvalue problem $A-\lambda B$, where $A\succ0$.
Because of its efficiency, this method has been used to solve different kinds of eigenvalue problems.
Nevertheless, up to now, the convergence analysis of this method has been incomplete.
As far as we know, current results on the estimate for the convergence rate fall into two categories.
Ovtchinnikov \cite{ovtc:08:jacobi:I,ovtc:08:jacobi:II} dealt with the convergence rate of a standard form for LOBPCG applied to standard Hermitian eigenvalue problems and generalized Hermitian eigenvalue problems $(A-\lambda B)x=0$ with a positive definite $B$.
He analyzed the convergence rate of LOBPCG by constructing a relationship to SD and then bringing in the convergence rate of SD by Samokish \cite{samo:58}.
On the other hand, also for those two types of eigenvalue problems, Neymeyr and his co-authors derived the convergence rate of a special form named ``sharp estimate'' for preconditioned inverse vector iteration (PINVIT) and (preconditioned) SD in a series of works \cite{neym:01:geometric,knne:03,neoz:11:convergence,neym:12:geometric,arkn:15:convergence}.
In this paper, we will consider several instances of the generalized eigenvalue problem and try to apply the developed ideas to them for the algorithm LOBPCG for computing the extreme eigenvalue, which means the block size is $1$, or equivalently, a vector version of LOBPCG.
The problems are:
\begin{enumerate}
	\item \emph{Definite matrix pair} $F(\lambda)=\lambda B-A$, which means there exists $\lambda_0\in\mathbb{R}$ such that $F(\lambda_0)\prec0$.
		Let $\interval=(\lambda_0,+\infty)$ and $\domain=\{x\in\mathbb{C}^n : x^{\HH}Bx>0\}$, and let the proper constraint be $X^{\HH}BX=I$, satisfying the assumptions above (see, e.g.~\cite{kove:95,nave:03,lilb:13,lili:13}).
		Here, the investigated algorithm coincides with the algorithm given by Kressner et al \cite[Algorithm~1]{krps:14:indefinite}.
	\item \emph{Hyperbolic quadratic matrix polynomial} $F(\lambda)=\lambda^2A+\lambda B+C$, with $A\succ0$ and assuming there exists $\lambda_0\in\mathbb{R}$ such that $F(\lambda_0)\prec0$.
		Let $\interval=(\lambda_0,+\infty)$ and $\domain=\mathbb{C}^n$, and let the proper constraint be $\rank(X)=k$ or $X^{\HH}AX=I$, satisfying the assumptions above (see, e.g.~\cite{duff:55,mark:88,gula:05,liangL2015hyperbolic}).
		Here, the investigated algorithm coincides with the algorithm given by Liang and Li \cite[Algorithm~11.2]{liangL2015hyperbolic}.
\end{enumerate}

The rest of this paper is organized as follows.
First, some notation is introduced.
Section~2 presents the generic framework of LOBPeCG for any kind of Hermitian matrix polynomial satisfying the assumptions at the beginning of the paper, and also its convergence analysis.
Section~3 applies this convergence analysis to the two problems listed above.
In Section~4, two numerical examples are given to illustrate the convergence rate.
Some conclusions are provided in Section~5.
Appendices A and B are used to take care of detailed and difficult estimates in the proof of the convergence analysis in Section~2.

\subparagraph{Notation.}
Throughout this paper,
$I_n$ (or simply $I$ if its dimension is clear from the context) is the $n\times n$ identity matrix, and $e_j$ is its $j$th column.
$\mathbf{1}_n =\sum_{j=1}^n e_j$ (or also simply $\mathbf{1}$ if its dimension is clear from the context).
$\diag(\alpha_1,\dots,\alpha_n)$ is a diagonal matrix whose diagonal entries are $\alpha_1,\dots,\alpha_n$.
$X^{\HH}$ is the conjugate transpose of a vector or matrix $X$,
and $\|X\|$ is its spectral norm,
$X^{\pinv}$ is the Moore-Penrose inverse of a matrix $X$.

Given a matrix $A$ and a vector $x$, the $(m+1)$-dimensional Krylov subspace is denoted by $\krylov_m(A,x)=\subspan\{x,Ax,\dots,A^mx\}$,
and $\range(A)$ denotes $A$'s column subspace.

We use $A\succ 0$ ($A\succeq 0$) to indicate that $A$ is Hermitian positive (semi-)definite, and $A\prec 0$ ($A\preceq 0$) if
$-A\succ 0$ ($-A\succeq 0$). For $A\succeq 0$, $A^{1/2}$ is the unique positive semidefinite square root of $A$.

For a Hermitian matrix $A$, its eigenvalues are denoted by
\[
	\lambda_{\min}(A)\le\lambda_{\min}^{(2)}(A)\le\dots\le\lambda_{\min}^{(n)}(A), \quad\text{or}\quad
	\lambda_{\max}^{(n)}(A)\le\dots\le\lambda_{\max}^{(2)}(A)\le\lambda_{\max}(A).
\]
For any two functions $f(x),g(x)$, by $f(x)\sim g(x)$ we denote the case that
$\tau_2 f(x) \le g(x) \le \tau_1 f(x) $ for some $\tau_1,\tau_2>0$ and all $x$ in the joint domain of $f$ and $g$.
Similarly, by $f_i\sim g_i$ we denote the same situation for two sequences $\{f_i\},\{g_i\}$.
Clearly ``$\sim$'' is an equivalence relation.

Recall the matrix polynomial $F(\lambda)$ from \eqref{eq:Fpoly}.
Define the corresponding residual vector $r(x):=F(\rho(x))x$.
Then $x^{\HH}r(x)=0$ and
\[
	r(x)= -\frac{1}{2}\sigma(x)\nabla\rho(x),
\]
because
\[
	0=\nabla(x^{\HH}F(\rho(x))x)=x^{\HH}F'(\rho(x))x\nabla\rho(x) + 2F(\rho(x))x
	=\sigma(x)\nabla\rho(x) + 2r(x).
\]
Denote the divided difference by
\[
	\Phi (\rho_1,\rho_2):=\frac{F(\rho_1)-F(\rho_2)}{\rho_1-\rho_2}.
\]
Then for any nonzero $x$, define
\[
	P_{x,\rho_1,\rho_2} := \frac{xx^{\HH}\Phi(\rho_1,\rho_2) }{x^{\HH}\Phi(\rho_1,\rho_2) x},
\]
and
\[
	\check F_{\rho_1,\rho_2}(\rho;x) := \left(I - P_{x,\rho_1,\rho_2}^{\HH}\right)F(\rho)\left(I - P_{x,\rho_1,\rho_2}\right).
\]
It is easy to check  that
\[
	P_{x,\rho_1,\rho_2} x=x, \quad x^{\HH}\Phi(\rho_1,\rho_2) P_{x,\rho_1,\rho_2}=x^{\HH}\Phi(\rho_1,\rho_2),\quad  P_{x,\rho_1,\rho_2}^2=P_{x,\rho_1,\rho_2},
\]
i.e., $P_{x,\rho_1,\rho_2}$ is an (oblique) projection.

\section{Generic LOBPeCG Framework}
First we present a framework for LOBPeCG, namely Algorithm~\ref{alg:LOBPeCG}.
Note that in the shortcut $\LOCG(n_b,m_e)$,
$n_b$ represents the block size, i.e., the number the eigenpairs to compute simultaneously, while
$m_e$ indicates the size of the subspace extension so that $m_e+1$ is the dimension of the Krylov subspace.
\begin{algorithm}[ht]
	\caption{Locally optimal block preconditioned extended conjugate gradient method: $\LOCG(n_b,m_e)$}\label{alg:LOBPeCG}
	Given an initial proper approximation $X_0\in{\mathbb C}^{n\times n_b}$,
	and an integer $m_e\ge 1$, and a series of preconditioners $\{K_{i;j}\}$,
	the algorithm computes the approximations of the eigenpairs $(\lambda_j,u_j)$ for $j\in{\mathbb J}$, where ${\mathbb J}=\{1\le j\le n_b\}$  for computing the few smallest  eigenpairs.
	\vspace{2pt} \hrule
	\begin{algorithmic}[1]
		\State solve the projected problem for $X_0^{\HH}F(\lambda)X_0$ to get its  eigenpairs $(\rho_{0;j},y_j)$;
		\State $X_0 \,(\,=[\dots,x_{0;j},\dots])=X_0[y_1,\dots,y_{n_b}]$,  $X_{-1}=0$, $ {\mathbb J}=\{1\le j\le n_b\}$;
		\For{$i=0,1,\dots$}
		\State construct preconditioners $K_{i;j}$ for $j\in {\mathbb J}$;
		\State compute a basis matrix $Z_i$ of the subspace
			$\sum_{j\in {\mathbb J}}\krylov_{m_e}(K_{i;j}F(\rho_{i;j}),x_{i;j})+\range(X_{i-1})$;
		\State compute the $n_b$  proper eigenpairs of
		$Z_i^{\HH}F(\lambda)Z_i$:  $(\rho_{i+1;j},y_{i;j})$ for $j\in {\mathbb J}$
		and let $\Omega_{i+1}=\diag(\dots,\rho_{i+1;j},\dots)$ whose diagonal entries are those for $j\in {\mathbb J}$;
		\State $X_{i+1} \,(\,=[\dots,x_{i+1;j},\dots])=Z_iY_i$, where $Y_i=[\dots,y_{i;j},\dots]$ whose columns are those for $j\in {\mathbb J}$;
		\EndFor
		\State\Return approximate  eigenpairs to $(\lambda_j,u_j)$ for $j\in {\mathbb J}$.
	\end{algorithmic}
\end{algorithm}

We will deal with $\LOCG(1,m_e)$ in the following.
Since $j\equiv 1$, we can omit the index $j$ safely.

In every iteration of the algorithm, computing the proper eigenpairs of $Z^{\HH}F(\lambda)Z$ is equivalent to solving the following optimization problem:
\begin{equation} \label{eq:iteration-optimize}
	\rho_{i+1}= \rho(Z_iy_i) = \min_{
		\begin{subarray}{c}
			\text{proper $y$}
		\end{subarray}
	}\rho(Z_iy),
\end{equation}
where $Z_i$ is a basis of $\subspan\{x_{i},K_iF(\rho_{i})x_{i},\dots,(K_iF(\rho_{i}))^{m_e-1}x_{i},x_{i-1}\}$.

\begin{theorem} \label{thm:convergence:nb=1}
	Let the sequences $\{\rho_i\},\{x_i\},\{r_i:=F(\rho_i)x_i\}$ be produced by $\LOCG(1,m_e)$.
	Suppose that for all $i$, $y_i$ is a stationary point of $\rho(Z_iy)$.
	\begin{enumerate}
		\item Only one of the following two mutually exclusive situations can occur:
			\begin{enumerate}
				\item For some $i$, 
					$r_i=0$, and then
					$ \krylov_{m_e}(K_iF(\rho_{i}),x_{i})=\subspan\{x_{i}\}$ for $m_e\ge2$.
					Then we have
					\begin{equation}\label{eq:lucky-case}
						\rho_i=\rho_{i+1}=\dots,\,\,
						x_i=x_{i+1}=\dots,\,\,
						r_i=r_{i+1}=\dots=0,
					\end{equation}
					and $(\rho_i,x_i)$ is an eigenpair of $F(\lambda)$.
				\item $\rho_i$ is strictly monotonically decreasing, and
					$\rho_i\to\hat\rho\in[\lambda_-,\lambda_+]$ as $i\to\infty$, and
					$r_i\ne 0$ for all $i$, and no two $x_i$ are linearly dependent.
			\end{enumerate}
		\item $x_i^{\HH}r_i=0$, $Z_i^{\HH}r_{i+1}=0$.
		\item in the case of Item~1(b), if $\{x_i\}$ is bounded under the proper constraint, then
			\begin{enumerate}
				\item $r_i\ne 0$ for all $i$ but $r_i\to 0$ as $i\to\infty$,
				\item $\hat\rho$ is an eigenvalue of $F(\lambda)$, and any limit point $\hat x$ of $\{x_i\}$ is a corresponding
					eigenvector, i.e., $F(\hat\rho)\hat x=0$.
			\end{enumerate}
	\end{enumerate}
\end{theorem}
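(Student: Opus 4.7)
My plan is to prove the three items in the order 2, 1, 3, since the orthogonality relations of Item~2 are the basic tool used in the other two. The identity $x_i^{\HH}r_i=0$ (Item~2, first part) is immediate from $x^{\HH}F(\rho(x))x=0$ specialized to $x=x_i$. For $Z_i^{\HH}r_{i+1}=0$: by hypothesis $y_i$ is a stationary point of $y\mapsto\rho(Z_iy)$, so applying the chain rule together with the gradient formula $\nabla\rho(x)=-(2/\sigma(x))\,r(x)$ from the notation section yields $Z_i^{\HH}r(Z_iy_i)=Z_i^{\HH}r_{i+1}=0$.

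For Item~1, because $x_i\in\range(Z_i)$ by construction, the minimization \eqref{eq:iteration-optimize} forces $\rho_{i+1}\le\rho(x_i)=\rho_i$, so $\{\rho_i\}$ is non-increasing. If $r_i=0$ at some step, then $F(\rho_i)x_i=0$ identifies $(\rho_i,x_i)$ as an eigenpair, and since $K_iF(\rho_i)x_i=0$ all higher Krylov iterates vanish, collapsing $\krylov_{m_e}(K_iF(\rho_i),x_i)$ to $\subspan\{x_i\}$; an induction on $i$, using that $x_i$ already realizes the minimum over $\range(Z_i)$, then yields the stagnation \eqref{eq:lucky-case}. In the opposite case $r_i\ne 0$ for every $i$: since $K_ir_i=K_iF(\rho_i)x_i\in\range(Z_i)$, the direction $x_i+tK_ir_i$ provides genuine descent for a positive definite preconditioner, giving $\rho_{i+1}<\rho_i$. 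A strictly decreasing sequence bounded below by $\lambda_-$ converges to some $\hat\rho\in[\lambda_-,\lambda_+]$, and strict monotonicity rules out $x_j=\alpha x_i$ for $j>i$, which would force $\rho_j=\rho_i$.

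For Item~3, assume $\{x_i\}$ is bounded. Telescoping gives $\sum_i(\rho_i-\rho_{i+1})=\rho_0-\hat\rho<\infty$, hence $\rho_i-\rho_{i+1}\to 0$. Along $\tilde x_i(t):=x_i+tK_ir_i\in\range(Z_i)$, a Taylor expansion combined with $\nabla\rho(x_i)=-(2/\sigma(x_i))r_i$ yields
\[
\rho(\tilde x_i(t))=\rho_i-\frac{2t\,r_i^{\HH}K_ir_i}{\sigma(x_i)}+O(t^2),
\]
and minimizing the quadratic upper bound in $t$ produces $\rho_i-\rho_{i+1}\gtrsim (r_i^{\HH}K_ir_i)^2/M_i$, with $M_i$ collecting second-order constants ($\sigma(x_i)^2$, $\|K_ir_i\|^2$, bounds on $F''$) that remain bounded on the compact range of $\{(x_i,\rho_i)\}$ supplied by the proper constraint. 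Assuming $\{K_i\}$ is uniformly positive definite and bounded, this forces $r_i\to 0$. Passing to the limit along any convergent subsequence $x_{i_k}\to\hat x$ in $r_{i_k}=F(\rho_{i_k})x_{i_k}$ then yields $F(\hat\rho)\hat x=0$, so $\hat\rho$ is an eigenvalue with eigenvector $\hat x$.

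The \emph{main obstacle} is the quantitative descent estimate in the last paragraph: deriving $\rho_i-\rho_{i+1}\gtrsim c\,\|r_i\|^2$ for some uniform $c>0$ requires bounding $\sigma(x_i)$ away from zero and controlling the second-order Taylor remainder uniformly in $i$, which needs both the compactness supplied by boundedness of $\{x_i\}$ and quantitative hypotheses on the preconditioners. Without such bounds, $\rho_i-\rho_{i+1}\to 0$ by itself is insufficient to conclude $r_i\to 0$.
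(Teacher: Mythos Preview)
Your treatment of Items~1 and~2 matches the paper's proof essentially line for line (stationarity plus the chain rule for $Z_i^{\HH}r_{i+1}=0$; monotonicity from $x_i\in\range(Z_i)$; strict descent along $K_ir_i$ when $r_i\ne0$). One small point: in case~1(a) you say ``$x_i$ already realizes the minimum over $\range(Z_i)$,'' but you have not justified that. The paper closes this by observing $\range(Z_i)=\subspan\{x_i,x_{i-1}\}\subset\range(Z_{i-1})$ (because $x_i=Z_{i-1}y_{i-1}$), so the minimum over the smaller space cannot drop below $\rho_i$.

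For Item~3(a) the paper takes a genuinely different route, and this is where your obstacle lies. You aim for a quantitative one-step descent bound $\rho_i-\rho_{i+1}\gtrsim c\,(r_i^{\HH}K_ir_i)^2$ via a Taylor expansion, then use summability of $\rho_i-\rho_{i+1}$. As you correctly flag, making the constant $c$ uniform in $i$ requires uniform positive definiteness and boundedness of $\{K_i\}$, which the theorem does \emph{not} assume; with varying, unbounded preconditioners your inequality can degenerate and the argument does not close. The paper sidesteps any quantitative estimate by a contradiction argument that uses only compactness and sign information: suppose $r_{i_j}\to\hat r\ne0$ along a subsequence with $x_{i_j}\to\hat x$, and form the $2\times2$ projected polynomial $F_{i_j}(\lambda)=Y_{i_j}^{\HH}F(\lambda)Y_{i_j}$ with $Y_{i_j}=[x_{i_j},r_{i_j}]$. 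Its smallest eigenvalue $\mu_{j;1}$ satisfies $\rho_{i_j+1}\le\mu_{j;1}$, hence in the limit $\hat\rho\le\hat\mu_1$ and therefore $\hat F(\hat\rho)\preceq0$. But directly computing the limit gives
\[
\hat F(\hat\rho)=\lim_{j\to\infty}
\begin{bmatrix} 0 & r_{i_j}^{\HH}r_{i_j}\\ r_{i_j}^{\HH}r_{i_j} & r_{i_j}^{\HH}F(\rho_{i_j})r_{i_j}\end{bmatrix}
=\begin{bmatrix} 0 & \hat r^{\HH}\hat r\\ \hat r^{\HH}\hat r & \hat r^{\HH}F(\hat\rho)\hat r\end{bmatrix},
\]
which is indefinite since $\hat r^{\HH}\hat r>0$. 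This contradiction forces $\hat r=0$, hence $r_i\to0$, with no uniform control on $\{K_i\}$ needed. Item~3(b) then follows exactly as you wrote, by passing to the limit in $F(\rho_{i_k})x_{i_k}=r_{i_k}$.

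So your proposal is correct for Items~1--2 and 3(b), but for 3(a) it proves only a weaker statement (under extra hypotheses on the preconditioners). The paper's projected-$2\times2$ contradiction is the missing idea that removes those hypotheses.
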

\begin{proof}
	The proof is nearly the same as its analogue by Liang and Li \cite[Theorem~8.1]{liangL2015hyperbolic}.
	First by \eqref{eq:iteration-optimize}, clearly $\rho_{i+1}\le\rho_i$.
	There are only two possibilities: either $r_i=0$ for some $i$ or $r_i\ne0$ for all $i$.
	If $r_i=F(\rho_{i})x_{i}=0$ for some $i$, then $\range(Z_i)=\subspan\{x_{i},x_{i-1}\}$.
	Note that $\range(Z_{i-1})=\subspan\{x_{i-1},K_{i-1}F(\rho_{i})x_{i-1},\dots,(K_{i-1}F(\rho_{i}))^{m_e-1}x_{i-1},x_{i-2}\}$
	and $x_i=Z_{i-1}y_{i-1}\in\range(Z_{i-1})$.
	Then $\range(Z_i)\subset\range(Z_{i-1})$, which implies $\rho_{i+1}=\rho_i$ and $x_{i+1}=x_i$ and then $r_{i+1}=r_i=0$.
	Thus, \eqref{eq:lucky-case} holds.
	Now consider $r_i\ne0$ for all $i$.
	Note that $r_i\ne 0$ implies $\nabla\rho_i\ne 0$, and so $\rho(x_i-\nu_1K_i\nabla\rho_i)<\rho(x_i)$ for some $\nu_1$ with sufficiently tiny $|\nu_1|$.
	This in turn implies $\rho(x_i+\nu_2r_i)<\rho(x_i)$ for some $\nu_2$ with sufficiently tiny $|\nu_2|$.
	Note that $x_i$ satisfies the proper constraint and the constraint is continuous, which implies $x_i+\nu_2r_i$ satisfies the proper constraint.
	Thus,
	\[
		\rho_{i+1}=\min_t\rho(Zy_i)\le\rho(x_i+\nu_2r_i)<\rho(x_i).
	\]
	Therefore $\rho_i$ is strictly monotonically decreasing.
	Since $\rho_i$ is strictly monotonically decreasing and
	bounded from below since $\rho_i\ge\lambda_-$, it is convergent and $\rho_i\to\hat\rho\in[\lambda_-,\lambda_+]$
	because $\rho_i=\rho(x_i)\in[\lambda_-,\lambda_+]$ for all $i$.
	No two $x_i$ are linear dependent because linear dependent $x_i$ and $x_j$ produce $\rho_i=\rho_j$. This proves Item~1.

	For Item~2,
	easy to see $x_i^{\HH}r_i=x_i^{\HH}F(\rho_i)x_i=0$.
	Since $y_i$ is a stationary point,
	\[
		Z_i^{\HH}r_{i+1}=-\frac{\sigma(x_{i+1})}{2}Z_i^{\HH}\nabla\rho(x_{i+1})=-\frac{\sigma(x_{i+1})}{2}Z_i^{\HH}\nabla\rho(Z_iy_i)=-\frac{\sigma(x_{i+1})}{2}\frac{\diff \rho(Z_iy_i)}{\diff y}=0.
	\]

	For Item~3(a), we have $\|r_i\|=\|F(\rho_i)x_i\|\le\left[\sum_{k=0}^m\|A_k\||\lambda_{\ell}|^{m-k}\right]\|x_i\|$
	so $\{r_i\}$ is a bounded sequence. It suffices to show that
	any limit point of $\{r_i\}$ is the zero vector. Assume, to the contrary, $\{r_i\}$
	has a nonzero limit point $\hat r$, i.e.,
	$r_{i_j}\to \hat r$, where $\{r_{i_j}\}$ is a subsequence of $\{r_i\}$. Since $\{x_{i_j}\}$ is bounded, it has a convergent subsequence. Without loss of generality, we may assume
	$x_{i_j}$ itself is convergent and $x_{i_j}\to\hat x$ as $j\to\infty$.
	We have
	$\hat r^{\HH}\hat x=0$ and $\hat x$ satisfies the proper constraint because
	$r_{i_j}^{\HH}x_{i_j}=0$ and $x_{i_j}$ satisfies the proper constraint.
	Now consider the projected problem for
	\begin{equation*}\label{eq:sequence:1d:basic:pf-1}
		F_{i_j}(\lambda):=Y_{i_j}^{\HH}F(\lambda)Y_{i_j}
		=\begin{bmatrix}
			x_{i_j}^{\HH}F(\lambda)x_{i_j} & x_{i_j}^{\HH}F(\lambda)r_{i_j} \\
			r_{i_j}^{\HH}F(\lambda)x_{i_j} & r_{i_j}^{\HH}F(\lambda)r_{i_j}
		\end{bmatrix},
	\end{equation*}
	where $Y_{i_j}=[x_{i_j},r_{i_j}]$.
	Since $r_{i_j}^{\HH}x_{i_j}=0$, $\rank(Y_{i_j})=2$, and thus $F_{i_j}(\lambda)$ still satisfies the assumptions at the beginning of the paper.
	Denote by $\mu_{j;k}$ its eigenvalues. It can be seen that
	\begin{equation}\label{eq:sequence:1d:basic:pf-2}
		\lambda_-<\lambda_1\le\mu_{j;1}\le\mu_{j;2}\le\lambda_{\ell}.
	\end{equation}
	Then 
	$\lambda_1\le\rho_{i_j+1}\le\mu_{j;1}$. Let
	\[
		\what F(\lambda)=\lim_{j\to\infty}F_{i_j}(\lambda)
	\]
	whose eigenvalues
	are denoted by $\hat\mu_i$. By the continuity of the eigenvalues with respect to the entries of coefficient matrices,
	we know $\mu_{j;i}\to\hat\mu_i$ as $j\to\infty$, and thus
	\begin{equation}\label{eq:sequence:1d:basic:pf-2a}
		\lambda_-<\lambda_1\le\hat\mu_1\le\hat\mu_2\le\hat\lambda_{\ell}.
	\end{equation}
	Notice by \eqref{eq:sequence:1d:basic:pf-2} and \eqref{eq:sequence:1d:basic:pf-2a}
	\begin{equation}\label{eq:sequence:1d:basic:pf-3}
		\lambda_1\le\rho_{i_j+1}\le\mu_{j;1}
		\quad\Rightarrow\quad
		\lambda_-<\lambda_1\le\hat\rho\le\hat\mu_1.
	\end{equation}
	On the other hand, by \eqref{eq:sequence:1d:basic:pf-2}, we have
	\[
		\what F(\hat\rho)=\lim_{j\to\infty}F_{i_j}(\rho_{i_j})
		=\lim_{j\to\infty}\begin{bmatrix}
			0			& r_{i_j}^{\HH}r_{i_j} \\
			r_{i_j}^{\HH}r_{i_j} & r_{i_j}^{\HH}F(\rho_{i_j})r_{i_j}
		\end{bmatrix}
		=\begin{bmatrix}
			0			& \hat r^{\HH}\hat r \\
			\hat r^{\HH}\hat r & \hat r^{\HH}F(\hat\rho)\hat r
		\end{bmatrix}
	\]
	which is indefinite because $\hat r^{\HH}\hat r>0$. But by \eqref{eq:sequence:1d:basic:pf-3},
	$\what F(\hat\rho)\preceq 0$, a contradiction. So $\hat r=0$, as was to be shown.

	For Item~3(b), since
	$\|x_i\|=1$, $\{x_i\}$ has at least one limit point.
	Let $\hat x$ be any limit point of $x_i$, i.e., $x_{i_j}\to \hat x$.
	Taking the limit on both sides of $F(\rho_{i_j})x_{i_j}=r_{i_j}$ yields
	$F(\hat\rho)\hat x=0$,
	i.e., $(\hat\rho,\hat x)$ is an eigenpair.
\end{proof}

Theorem~\ref{thm:convergence:nb=1} shows that $\LOCG(1,m_e)$ converges globally, but provides no information on its convergence rate.
In order to obtain such a rate, we proceed as follows:
first, a relationship between the quantities of two successive iterations is established in Theorem~\ref{thm:linesearch};
then, by this relationship, $\LOCG(1,m_e)$ is compared with $\SD(1,m_e)$ in Theorem~\ref{thm:rate:compare}, where $\SD(1,m_e)$ is the block preconditioned steepest descent method;
finally, the rate follows from this comparison in Theorem~\ref{thm:rate}.
These three theorems are reminiscent of the theorems by Ovtchinnikov
\cite[Theorem~2.6,Theorem~4.1, and Theorem~4.2]{ovtc:08:jacobi:I},
respectively.
Our theorems are more general than those 
w.r.t.\ three aspects:
they hold for any Hermitian matrix polynomial $F(\lambda)$ satisfying the assumptions in Section~\ref{sec:intro}, other than only the standard Hermitian eigenvalue problem $F(\lambda)=\lambda I - A$;
they allow for any $m_e$ in $\LOCG(1,m_e)$, other than only $m_e=1$;
the estimates are somewhat refined.
\begin{theorem} \label{thm:linesearch}
	Let $x\ne0,r(x)\ne0,p\ne0$, and $S=\begin{bmatrix} s^{(1)}&\dots&s^{(k)}
	\end{bmatrix}$, which satisfy $p^{\HH}r(x)\ne0, S^{\HH}r(x)=0$.
	Suppose that $ \begin{bmatrix} x & p & S
	\end{bmatrix} $ is of full column rank,
	and
	$(\alpha_{\opt},b_{\opt})$ is a stationary point of the function $\rho\left(x+\alpha (I-P_{x,\rho(x_{\opt}),\rho(x)})[p+Sb]\right)$.
	Write
	\[
		s = p + Sb_{\opt},\quad
		d = \alpha_{\opt} (I-P_{x,\rho(x_{\opt}),\rho(x)})s,\quad
		x_{\opt} = x + d.
	\]
	Then, for the nontrivial case that $x_{\opt}\ne x$,
	\begin{equation}\label{eq:thm:linesearch:r_opt-perp}
		\alpha_{\opt}\ne0,\quad r_{\opt}\perp\subspan\{x,p,S,s,d\},
	\end{equation}
	and
	\begin{align}
		\alpha_{\opt} &= -\frac{p^{\HH}r(x)}{s^{\HH}\check F_{\rho(x_{\opt}),\rho(x)}(\rho(x_{\opt});x)s} = -\frac{d^{\HH}F(\rho(x_{\opt}))d}{r(x)^{\HH}p}, \label{eq:thm:linesearch:alpha}
		\\ \rho(x_{\opt})-\rho(x) &= \frac{|r(x)^{\HH}p|^2}{\left[x^{\HH}\Phi (\rho(x_{\opt}),\rho(x))x\right]\left[s^{\HH}\check F_{\rho(x_{\opt}),\rho(x)}(\rho(x_{\opt});x)s\right]}
		= \frac{d^{\HH}F(\rho(x_{\opt}))d}{x^{\HH}\Phi (\rho(x_{\opt}),\rho(x))x}, 
		\label{eq:thm:linesearch:rho}
		\\ r(x_{\opt})-r(x) &= \check F_{\rho(x_{\opt}),\rho(x)}(\rho(x_{\opt});x)d, \label{eq:thm:linesearch:r}
		\\ b_{\opt} &= -\left[S^{\HH}\check F_{\rho(x_{\opt}),\rho(x)}(\rho(x_{\opt});x)S\right]^{\pinv}S^{\HH}\check F_{\rho(x_{\opt}),\rho(x)}(\rho(x_{\opt});x)p+v, \label{eq:thm:linesearch:beta}
	\end{align}
	where $v$ is a vector satisfying
	\begin{equation}\label{eq:thm:linesearch:v}
		\check F_{\rho(x_{\opt}),\rho(x)}(\rho(x_{\opt});x)Sv\perp\subspan\{x,p,S,s,d\},
	\end{equation}
	as long as
	\[
		x^{\HH}\Phi (\rho(x_{\opt}),\rho(x))x \ne0, \quad
		s^{\HH}\check F_{\rho(x_{\opt}),\rho(x)}(\rho(x_{\opt});x)s \ne0,
	\]
	Besides, \eqref{eq:thm:linesearch:alpha}--\eqref{eq:thm:linesearch:beta} also holds for the trivial case that $\alpha_{\opt}=0,d=0,x_{\opt}=x,\rho(x_{\opt})=\rho(x),r(x_{\opt})=r(x)$.
\end{theorem}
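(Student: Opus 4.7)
The plan is to set $\rho_1 := \rho(x_{\opt})$, $\rho_2 := \rho(x)$, $P := P_{x,\rho_1,\rho_2}$, and $\check F := \check F_{\rho_1,\rho_2}(\rho_1;x)$, and then to lean on two algebraic identities throughout. The first is the identity already noted in the paper, $x^{\HH}\Phi(\rho_1,\rho_2)(I-P) = 0$. The second, which is the real workhorse and which I would prove first, is
\[
    (I - P^{\HH}) F(\rho_1) x = r(x).
\]
This follows from the divided-difference expansion $F(\rho_1) x = r(x) + (\rho_1-\rho_2)\Phi x$ combined with $x^{\HH} F(\rho_1) x = (\rho_1-\rho_2) x^{\HH}\Phi x$: the $P^{\HH}$ piece subtracts exactly the $(\rho_1-\rho_2)\Phi x$ tail.

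Next I would extract the stationarity conditions. The implicit appearance of $\rho_1 = \rho(x_{\opt})$ inside $P$ is handled by interpreting the stationarity as $(\alpha,b)$-stationarity with $\rho_1$ held fixed at the self-consistent value. Applying the chain rule and $\nabla\rho(y) = -\frac{2}{\sigma(y)} r(y)$ then yields
\[
    r_{\opt}^{\HH}(I-P) s = 0, \qquad \alpha_{\opt}\, r_{\opt}^{\HH}(I-P) S = 0.
\]
In the nontrivial case $\alpha_{\opt}\ne 0$ (else $d=0$ and $x_{\opt}=x$), so the first equation gives $r_{\opt}^{\HH} d = 0$; combined with $r_{\opt}^{\HH} x_{\opt} = 0$ this forces $r_{\opt}^{\HH} x = 0$, whence $r_{\opt}^{\HH} P = 0$, whence $r_{\opt}^{\HH} s = 0$ and $r_{\opt}^{\HH} S = 0$, and finally $r_{\opt}^{\HH} p = r_{\opt}^{\HH}(s - S b_{\opt}) = 0$. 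This gives \eqref{eq:thm:linesearch:r_opt-perp}.

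For the scalar formulas I would substitute $r_{\opt} = F(\rho_1)(x + d)$ into $r_{\opt}^{\HH} x = 0$ and $r_{\opt}^{\HH} s = 0$. Using the two identities above, the first equation collapses to $(\rho_1 - \rho_2)\, x^{\HH}\Phi x = -\alpha_{\opt}\, r(x)^{\HH} s$, and the second, after this relation is used to cancel an $x^{\HH}\Phi s$ term, collapses to $r(x)^{\HH} s + \bar\alpha_{\opt}\, s^{\HH}\check F s = 0$. Together with $r(x)^{\HH} s = r(x)^{\HH} p$ (from $S^{\HH} r(x) = 0$) these give \eqref{eq:thm:linesearch:alpha} and \eqref{eq:thm:linesearch:rho}; the alternative forms involving $d^{\HH} F(\rho_1) d = |\alpha_{\opt}|^2 s^{\HH}\check F s$ are immediate. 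For \eqref{eq:thm:linesearch:r}, expand $r_{\opt} - r(x) = (\rho_1-\rho_2)\Phi x + F(\rho_1) d$ and note that the first displayed relation above exactly makes $P^{\HH} F(\rho_1) d = -(\rho_1-\rho_2)\Phi x$, so the two $\Phi x$ pieces telescope and leave $(I-P^{\HH}) F(\rho_1) d = \check F d$ after invoking $(I-P) d = d$. For \eqref{eq:thm:linesearch:beta}, inserting the workhorse identity into $r_{\opt}^{\HH}(I-P) S = 0$ together with $S^{\HH} r(x) = 0$ reduces the condition to $S^{\HH}\check F(p + S b_{\opt}) = 0$, from which the Moore--Penrose form plus a vector $v \in \ker(S^{\HH}\check F S)$ follows; the automatic equalities $x^{\HH}\check F = 0$ and $\check F(I-P) = \check F$, together with $s^{\HH}\check F S = 0$ from stationarity, then force the orthogonality \eqref{eq:thm:linesearch:v}.

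The main obstacle is reading the hypothesis correctly: since $P$ depends implicitly on $\rho_1$ which depends on $(\alpha_{\opt},b_{\opt})$, one could try to differentiate through that dependence and obtain messier stationarity conditions, but the correct reading is to hold $\rho_1$ fixed at its self-consistent value---only then do the two clean conditions $r_{\opt}^{\HH}(I-P)s = 0$ and $r_{\opt}^{\HH}(I-P)S = 0$ emerge. Once this is settled, the rest is disciplined algebra organized around the two identities above, and the trivial case $\alpha_{\opt}=0$ is a direct check: every difference in \eqref{eq:thm:linesearch:alpha}--\eqref{eq:thm:linesearch:beta} vanishes on both sides.
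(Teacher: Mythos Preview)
Your proposal is correct and follows essentially the same route as the paper: both derive the stationarity conditions $r_{\opt}^{\HH}(I-P)s=0$ and $r_{\opt}^{\HH}(I-P)S=0$, bootstrap the full orthogonality \eqref{eq:thm:linesearch:r_opt-perp}, and then extract \eqref{eq:thm:linesearch:alpha}--\eqref{eq:thm:linesearch:beta} by expanding $r_{\opt}=F(\rho_1)(x+d)$ against $x$, $s$, and $S$. The only cosmetic difference is that you isolate the identity $(I-P^{\HH})F(\rho_1)x=r(x)$ up front as a reusable lemma, whereas the paper derives its transpose $x^{\HH}F(\rho_1)(I-P)s=r(x)^{\HH}p$ inline at the point of use; the remaining algebra, including your handling of the implicit $\rho_1$-dependence in $P$, matches the paper's treatment.
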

\begin{proof}
	Write
	\[
		\Phi_{\opt}=\Phi (\rho(x_{\opt}),\rho(x)),\quad P_{\opt}=P_{x,\rho(x_{\opt}),\rho(x)}=\frac{xx^{\HH}\Phi_{\opt}}{x^{\HH}\Phi_{\opt}x}.
	\]
	Recall from the end of Section~\ref{sec:intro} that we have
	\[
		 r(x_{\opt})^{\HH}x_{\opt}=0,\quad r(x)^{\HH}x=0,\quad r(x)^{\HH}P_{\opt}=0,\quad x^{\HH}\Phi_{\opt}P_{\opt}=x^{\HH}\Phi_{\opt}.
	\]
	Since $(\alpha_{\opt},b_{\opt})$ is a stationary point of the function $\rho$,
	\begin{equation} \label{eq:thm:linesearch:prf:diff-beta}
		\begin{aligned}[b]
			0 = \frac{\diff}{\diff b}\rho\left(x+\alpha_{\opt} (I-P_{\opt})(p+ Sb_{\opt})\right)
			&= \left(\nabla \rho(x+\alpha_{\opt} (I-P_{\opt})s)\right)^{\HH}\alpha_{\opt}(I-P_{\opt})S
			\\&= - \frac{2\alpha_{\opt}}{\sigma(x_{\opt})}r(x_{\opt})^{\HH}(I-P_{\opt})S
			,
		\end{aligned}
	\end{equation}
	and
	\begin{equation} \label{eq:thm:linesearch:prf:diff-alpha}
		\begin{aligned}[b]
			0 = \frac{\diff}{\diff \alpha}\rho(x+\alpha_{\opt} (I-P_{\opt})s)
			&= \left(\nabla \rho(x+\alpha_{\opt} (I-P_{\opt})s)\right)^{\HH}(I-P_{\opt})s
			\\&= - \frac{2}{\sigma(x_{\opt})}r(x_{\opt})^{\HH}(I-P_{\opt})s
			,
		\end{aligned}
	\end{equation}
	which means $r(x_{\opt})^{\HH}d=0$.
	Then $r(x_{\opt})^{\HH}x=r(x_{\opt})^{\HH}(x_{\opt}-d)=0$ and $r(x_{\opt})^{\HH}P_{\opt}=0$.
	Thus, $r(x_{\opt})^{\HH}s=0$ by \eqref{eq:thm:linesearch:prf:diff-alpha} and $r(x_{\opt})^{\HH}S=0$ by \eqref{eq:thm:linesearch:prf:diff-beta}, so that $r(x_{\opt})^{\HH}p=r(x_{\opt})^{\HH}(s-Sb_{\opt})=0$. Then \eqref{eq:thm:linesearch:r_opt-perp} holds.
	According to \eqref{eq:thm:linesearch:prf:diff-alpha},
	\begin{equation} \label{eq:thm:linesearch:prf:diff-alpha:2}
		\begin{aligned}[b]
			0
			&= x_{\opt}^{\HH}F(\rho(x_{\opt}))(I-P_{\opt})s
			\\&= x^{\HH}F(\rho(x_{\opt}))(I-P_{\opt})s+\overline{\alpha_{\opt}}s^{\HH}\check F_{\rho(x_{\opt}),\rho(x)}(\rho(x_{\opt});x)s.
		\end{aligned}
	\end{equation}
	Note that
	$ d^{\HH} F(\rho(x_{\opt}))d =
	|\alpha_{\opt}|^2s^{\HH}\check F_{\rho(x_{\opt}),\rho(x)}(\rho(x_{\opt});x)s
	$
	and
	\begin{equation} \label{eq:thm:linesearch:prf:rs}
		\begin{aligned}[b]
			x^{\HH}F(\rho(x_{\opt}))(I-P_{\opt})s
			&=  x^{\HH}\left[F(\rho(x_{\opt}))-F(\rho(x))\right](I-P_{\opt})s + x^{\HH}F(\rho(x))(I-P_{\opt})s
			\\&=  [\rho(x_{\opt})-\rho(x)]x^{\HH}\Phi_{\opt}(I-P_{\opt})s + r(x)^{\HH}(I-P_{\opt})s
			\\&=  r(x)^{\HH}s
			=  r(x)^{\HH}p
			.
		\end{aligned}
	\end{equation}
	Then by \eqref{eq:thm:linesearch:prf:diff-alpha:2}, we have \eqref{eq:thm:linesearch:alpha}.
	Since
	\begin{equation} \label{eq:thm:linesearch:prf:diff-r}
		\begin{aligned}[b]
			r(x_{\opt})-r(x)
			&= F(\rho(x_{\opt}))x_{\opt}-F(\rho(x))x
			\\&= F(\rho(x_{\opt}))x_{\opt}-F(\rho(x_{\opt}))x+F(\rho(x_{\opt}))x-F(\rho(x))x
			\\&= \alpha_{\opt}F(\rho(x_{\opt}))(I-P_{\opt})s + [\rho(x_{\opt})-\rho(x)]\Phi_{\opt}x
			,
		\end{aligned}
	\end{equation}
	by \eqref{eq:thm:linesearch:prf:rs}, we get
	\begin{equation} \label{eq:thm:linesearch:prf:diff-rho}
			\rho(x_{\opt})-\rho(x)
			=\frac{x^{\HH}(r(x_{\opt})-r(x)) - \alpha_{\opt}x^{\HH}F(\rho(x_{\opt}))(I-P_{\opt})s}{ x^{\HH}\Phi_{\opt}x}
			= - \frac{\alpha_{\opt}r(x)^{\HH}p}{ x^{\HH}\Phi_{\opt}x}
			.
	\end{equation}
	Together with \eqref{eq:thm:linesearch:alpha}, we obtain \eqref{eq:thm:linesearch:rho}. 
	Further,
	\begin{align*}
		\check F_{\rho(x_{\opt}),\rho(x)}(\rho(x_{\opt});x)d
		&= \alpha_{\opt}(I-P_{\opt}^{\HH})F(\rho(x_{\opt}))(I-P_{\opt})s
		\\&= \alpha_{\opt}\left(F(\rho(x_{\opt}))(I-P_{\opt})s - \frac{\Phi_{\opt}xx^{\HH}}{x^{\HH}\Phi_{\opt}x}F(\rho(x_{\opt}))(I-P_{\opt})s\right)
		\\&= \alpha_{\opt}\left(F(\rho(x_{\opt}))(I-P_{\opt})s - \Phi_{\opt}x\frac{r(x)^{\HH}p}{x^{\HH}\Phi_{\opt}x}\right)\qquad \text{by \eqref{eq:thm:linesearch:prf:rs}}
		\\&= \alpha_{\opt}\left(F(\rho(x_{\opt}))(I-P_{\opt})s + \Phi_{\opt}x\frac{\rho(x_{\opt})-\rho(x)}{\alpha_{\opt}}\right)\qquad \text{by \eqref{eq:thm:linesearch:prf:diff-rho}}
		\\&= r(x_{\opt})-r(x),\qquad \text{by \eqref{eq:thm:linesearch:prf:diff-r}}
	\end{align*}
	hence we obtain \eqref{eq:thm:linesearch:r}.
	Thus,
	\begin{align*}
		(x_{\opt}-x)^{\HH}\check F_{\rho(x_{\opt}),\rho(x)}(\rho(x_{\opt});x)(x_{\opt}-x)
		&= (r(x_{\opt})-r(x))^{\HH}(x_{\opt}-x)
		\\&= -r(x)^{\HH}(x_{\opt}-x)
		= -\alpha_{\opt}r(x)^{\HH}(I-P_{\opt})s
		= -\alpha_{\opt}r(x)^{\HH}p
		.
	\end{align*}
	Finally,
	\begin{align*}
		0 &= S^{\HH}(I-P_{\opt}^{\HH})(r(x_{\opt})-r(x)) \qquad\text{by \eqref{eq:thm:linesearch:prf:diff-beta}}
		\\&= S^{\HH}(I-P_{\opt}^{\HH})\check F_{\rho(x_{\opt}),\rho(x)}(\rho(x_{\opt});x)\alpha_{\opt}(I-P_{\opt})(p+Sb) \qquad\text{by \eqref{eq:thm:linesearch:prf:diff-r}}
		\\&= \alpha_{\opt} S^{\HH}\check F_{\rho(x_{\opt}),\rho(x)}(\rho(x_{\opt});x)(p+Sb)
		,\qquad\text{by the definition of $\check F$}
	\end{align*}
	which implies \eqref{eq:thm:linesearch:beta},
	and $S^{\HH}\check F_{\rho(x_{\opt}),\rho(x)}(\rho(x_{\opt});x)Sv=0$.
	Note that $S^{\HH}\check F_{\rho(x_{\opt}),\rho(x)}d=S^{\HH}[r(x_{\opt})-r(x)]=0$ by \eqref{eq:thm:linesearch:r},
	and $\check F_{\rho(x_{\opt}),\rho(x)}x=\check F(\rho(x_{\opt}))(I-P_{\opt})x=0$.
	It is easy to obtain \eqref{eq:thm:linesearch:v}.
\end{proof}
\begin{theorem} \label{thm:rate:compare}
	Suppose $\lambda_1<\rho_i<\lambda_2$.
	Assume that $K_i^{1/2}F'(\rho_i)K_i^{1/2}$ is positive definite in the search subspace, or equivalently, $Z_i^{\HH}K_i^{1/2}F'(\rho_i)K_i^{1/2}Z_i\succ0$.
	If $\rho_{i-1}-\lambda_1$ is sufficiently small,
	then for $\LOCG(1,1)$,
	either $\rho_i-\rho_{i+1}\ge\sqrt{\rho_{i-1}-\rho_i}$,
	or
	\begin{equation}\label{eq:thm:rate:compare}
		\frac{1}{\rho_i-\rho_{i+1}}+ \frac{1}{\rho_{i-1}-\rho_i}=
		\frac{1+O(\sqrt{\rho_i-\rho_{i+1}})+O(\rho_{i-1}-\rho_i)}{\rho_i-\rho_{i+1}^{\eSD}},
	\end{equation}
	where $\rho_{i+1}^{\eSD}$ is the minimal value of $\rho(x)$ in the subspace $\krylov_{m_e}(K_iF(\rho_i),x_i)$.
\end{theorem}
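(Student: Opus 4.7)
The plan is to apply Theorem~\ref{thm:linesearch} twice---once to the full LOCG(1,1) step (with augmenting direction $x_{i-1}$) and once to the corresponding SD step (with $x_{i-1}$ removed)---and then combine the two resulting expressions to isolate the conjugate-direction contribution, which should asymptotically produce the $1/(\rho_{i-1}-\rho_i)$ term in~\eqref{eq:thm:rate:compare}.

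For the LOCG step, set $x=x_i$, $p=K_iF(\rho_i)x_i$, $S=x_{i-1}$ in Theorem~\ref{thm:linesearch}. The hypothesis $S^{\HH}r(x)=0$ follows from Item~2 of Theorem~\ref{thm:convergence:nb=1} (since $x_{i-1}\in\range(Z_{i-1})$ and $Z_{i-1}^{\HH}r_i=0$), while $p^{\HH}r(x_i)=r_i^{\HH}K_ir_i\neq0$ follows from the positive-definiteness of $K_i^{1/2}F'(\rho_i)K_i^{1/2}$ on the search subspace. Writing $\Phi:=\Phi(\rho_{i+1},\rho_i)$ and $\check F:=\check F_{\rho_{i+1},\rho_i}(\rho_{i+1};x_i)$, formula~\eqref{eq:thm:linesearch:beta} reduces to the scalar $b_{\opt}=-c/b$ with $c:=x_{i-1}^{\HH}\check F\,p$ and $b:=x_{i-1}^{\HH}\check F\,x_{i-1}$, so $s^{\HH}\check F\,s=p^{\HH}\check F\,p-|c|^2/b$. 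Substituting into~\eqref{eq:thm:linesearch:rho} and reciprocating gives
\[
\frac{1}{\rho_i-\rho_{i+1}}=\frac{(x_i^{\HH}\Phi\,x_i)\,p^{\HH}\check F\,p}{|r_i^{\HH}p|^2}-\frac{(x_i^{\HH}\Phi\,x_i)\,|c|^2}{|r_i^{\HH}p|^2\,b}.
\]
A second invocation of Theorem~\ref{thm:linesearch} with empty $S$ produces $1/(\rho_i-\rho_{i+1}^{\eSD})=(x_i^{\HH}\Phi^{\eSD}\,x_i)(p^{\HH}\check F^{\eSD}\,p)/|r_i^{\HH}p|^2$, where $\Phi^{\eSD},\check F^{\eSD}$ use $\rho_{i+1}^{\eSD}$ in place of $\rho_{i+1}$.

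The next task is to identify the first term on the right with $1/(\rho_i-\rho_{i+1}^{\eSD})$ up to the error $O(\sqrt{\rho_i-\rho_{i+1}})$---via Taylor expansion of $\Phi,\check F$ in their second argument about $\rho_i$---and the second (correction) term with $1/(\rho_{i-1}-\rho_i)$. For the correction, I would exploit $\check F\,x_i=0$ (Hermiticity of $\check F$ together with $(I-P)x_i=0$) to rewrite $b=d_{i-1}^{\HH}\check F\,d_{i-1}$ and $c=-d_{i-1}^{\HH}\check F\,p$ with $d_{i-1}:=x_i-x_{i-1}$. Applying Theorem~\ref{thm:linesearch} at step $i-1$ yields $d_{i-1}^{\HH}F(\rho_i)d_{i-1}=-(\rho_{i-1}-\rho_i)(x_{i-1}^{\HH}\Phi(\rho_i,\rho_{i-1})x_{i-1})$, whence $b\sim\rho_{i-1}-\rho_i$. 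A parallel computation of $c$ using the orthogonality $r_i^{\HH}d_{i-1}=0$ from~\eqref{eq:thm:linesearch:r_opt-perp} at step $i-1$ should then show that the correction term $(x_i^{\HH}\Phi\,x_i)|c|^2/(b\,|r_i^{\HH}p|^2)$ equals $(1+O(\sqrt{\rho_i-\rho_{i+1}})+O(\rho_{i-1}-\rho_i))/(\rho_{i-1}-\rho_i)$, which yields~\eqref{eq:thm:rate:compare} after transposing to the left-hand side.

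The main obstacle is the asymptotic book-keeping: the matrices $\Phi,\check F$ at step $i$ differ from their analogues at step $i-1$ and from $F'(\lambda_1)$, and the oblique projection $P$ depends on $\rho_{i+1}$, so extracting the clean factor $1/(\rho_{i-1}-\rho_i)$ requires uniform perturbation bounds on all these objects under the hypotheses $\lambda_1<\rho_i<\lambda_2$ and $\rho_{i-1}-\lambda_1$ small. The ``either'' branch $\rho_i-\rho_{i+1}\ge\sqrt{\rho_{i-1}-\rho_i}$ is reserved for the case where this regime breaks down---for instance when $|c|^2/(b\,p^{\HH}\check F\,p)$ comes too close to $1$ for the reciprocation to be controllable; in that case LOCG has already made a step large enough that~\eqref{eq:thm:rate:compare} need not be proved separately. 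The most intricate of the perturbative estimates will naturally be deferred to the Appendices~A and~B announced in the introduction.
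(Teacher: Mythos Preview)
Your plan is correct and follows essentially the same route as the paper: apply Theorem~\ref{thm:linesearch} once for the full LOCG step and once for the SD step, rewrite the $x_{i-1}$ contribution via $\check F x_i=0$ as a $d_{i-1}$ contribution, and then use Theorem~\ref{thm:linesearch} at step $i-1$ together with $r_i^{\HH}d_{i-1}=0$ to identify the correction with $1/\delta_{i-1}$. The paper packages the same computation multiplicatively (writing $\kappa=\delta_i^{\eSD}/\delta_i=\kappa_1\kappa_2\kappa_3\kappa_4$ and isolating the $x_{i-1}$ effect in $\kappa_4$), whereas you do it additively by splitting $s^{\HH}\check F s=p^{\HH}\check F p-|c|^2/b$; these are algebraically equivalent. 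The paper also carries along the extra Krylov block $S_i$ and the projector $T_i$ to handle general $m_e$, which you (appropriately, given the theorem is stated for LOCG$(1,1)$) omit.

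One point to sharpen: your claim that the correction term equals $(1+O(\sqrt{\delta_i})+O(\delta_{i-1}))/\delta_{i-1}$ is not quite the right form. The cross term $O(\delta_{i-1})\|r_i\|$ in $c$ (the paper's $\beta_i$) produces an error of order $1/\|r_i\|$ after division by $\|r_i\|^4$, and since $\|r_i\|^2\sim\delta_i^{\eSD}$ this is $O(\sqrt{\delta_i})/\delta_i^{\eSD}$, \emph{not} $O(\sqrt{\delta_i})/\delta_{i-1}$. So the correction term should be written as $1/\delta_{i-1}+\bigl(O(\sqrt{\delta_i})+O(\delta_{i-1})\bigr)/\delta_i^{\eSD}$, with the error additive and measured against $1/\delta_i^{\eSD}$ rather than multiplicative against $1/\delta_{i-1}$. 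This is harmless for the conclusion---once you transpose and combine with the first term you still land on~\eqref{eq:thm:rate:compare}---but it is exactly the kind of book-keeping you flag as the ``main obstacle,'' and the paper's Appendix~\ref{sec:claim-proof} estimates (\eqref{eq:thm:rate:compare:prf:claim:kappa4:dd}, \eqref{eq:thm:rate:compare:prf:claim:kappa4:dr}, \eqref{eq:thm:rate:compare:prf:claim:kappa4:rFr/rr}) confirm that the errors must be tracked in this form.
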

\begin{remark}\label{rk:thm:rate:compare}
	If the case that $\rho_i-\rho_{i+1}\ge\sqrt{\rho_{i-1}-\rho_i}$ occurs,  the $i$th iteration improves the approximation a lot,
	so it is very exceptional.
\end{remark}
\begin{proof}
	Assume that $\rho_i-\rho_{i+1}\ge\sqrt{\rho_{i-1}-\rho_i}$ fails, namely
	\begin{equation}\label{eq:thm:rate:compare:prf:assumption}
		\rho_i-\rho_{i+1}<\sqrt{\rho_{i-1}-\rho_i}.
	\end{equation}
	For a general $K_i\succ0$, the $i$th iteration is just equivalent to
	the $i$th iteration of the algorithm applied to $K_i^{1/2}F(\lambda)K_i^{1/2}$ without a preconditioner,
	and then everything below can be easily examined.
	Thus, in the following we assume $K_i=I$.

	To use Theorem~\ref{thm:linesearch}, without loss of generality,
	suppose we normalize $x_i$ in every iteration to make the first element of $y_i$ (in Step~6 of Algorithm~\ref{alg:LOBPeCG}) be $1$.
	Then in the $i$th iteration,
	write
	\begin{gather*}
		\varepsilon_i=\rho_i-\lambda_1,\quad
		\delta_i=-(\rho_{i+1}-\rho_i)\ge0,\quad
		d_i = x_{i+1} - x_i,\quad
		F_i = F(\rho_i),\quad
		F'_i=F'(\rho_i),
		\\
		\Phi_i=\Phi(\rho_{i+1},\rho_i),\quad
		P_i=P_{x_i,\rho_{i+1},\rho_i},\quad
		\check F_i=\check F_{\rho_{i+1},\rho_i}(\rho_{i+1};x_i).
	\end{gather*}
	Clearly $d_{i-1}^{\HH}r_i=0$.
	Note that $\sigma(x_i)=x_i^{\HH}F'_ix_i>0$.
	Thus,
	\[
		\frac{x_i^{\HH}\Phi_ix_i}{x_i^{\HH}x_i}=\frac{\sigma(x_i)}{x_i^{\HH}x_i}+\sum_{k=2}^m\frac{(-\delta_i)^{k-1}}{k!}\frac{x_i^{\HH}F^{(k)}(\rho_i)x_i}{x_i^{\HH}x_i}=\frac{\sigma(x_i)}{x_i^{\HH}x_i}+O(\delta_i)>0.
	\]
	Without loss of generality, we assume $x_i^{\HH}\Phi_ix_i=1$.

	If the notations in Theorem~\ref{thm:linesearch} are adopted, then
	\[
		x_i = x,\quad  r_i = r(x),\quad   \rho_i = \rho(x), \qquad
		 x_{i+1} = x_{\opt},\quad  r_{i+1} = r(x_{\opt}),\quad   \rho_{i+1} = \rho(x_{i+1}). 
	 \]
	For
	\[
		S_i := (I-P_i)\left(I-\frac{r_ir_i^{\HH}}{r_i^{\HH}r_i}\right)\begin{bmatrix} F_ir_i & \dots & F_i^{m_e}r_i
		\end{bmatrix},
	\]
	we obtain $r_i^{\HH}S_i=0$.
	$\rho_{i+1}$ can be recognized as $\rho_{\opt}$ in Theorem~\ref{thm:linesearch} as we let
	\[
		p = r_i,\quad
		S = \begin{bmatrix} x_{i-1} & S_i
		\end{bmatrix}=:\wtd S_i.
	\]
	Without loss of generality, assume $\begin{bmatrix}
		r_i & x_{i-1} & S_i
	\end{bmatrix}$ is of full column rank, otherwise we can delete the last several columns of $S_i$, which will not affect the search process.
	Thus, by \eqref{eq:thm:linesearch:rho} and \eqref{eq:thm:linesearch:beta},
	\[
		\delta_i =\rho_i-\rho_{i+1} = -\frac{|r_i^{\HH}r_i|^2}{[x_i^{\HH}\Phi_ix_i][s_i^{\HH}\check F_is_i]},
	\]
	where
	\[
		s_i = r_i - \wtd S_i(\wtd S_i^{\HH}\check F_i\wtd S_i)^{\pinv}\wtd S_i^{\HH}\check F_ir_i+\wtd S_i v_i,
		\qquad \check F_i\wtd S_iv_i\perp\subspan\{x_i,r_i,\wtd S_i,s_i,d_i\}.
	\]

	To describe the search process in the subspace $\krylov_{m_e}(K_iF(\rho_i),x_i)$, we use the superscript ``$\cdot^{\eSD}$'' for certain terms, which gives
	\begin{gather*}
		\delta_i^{\eSD}=-(\rho_{i+1}^{\eSD}-\rho_i)\ge0,\quad
		F_{i+1}^{\eSD} = F(\rho_{i+1}^{\eSD}),\quad
		\Phi_i^{\eSD}=\Phi(\rho_{i+1}^{\eSD},\rho_i),\quad
		P_i^{\eSD}=P_{x_i,\rho_{i+1}^{\eSD},\rho_i},\quad
		\check F_i^{\eSD}=\check F_{\rho_{i+1}^{\eSD},\rho_i}(\rho_{i+1}^{\eSD};x_i).
		\\
		x_{i+1}^{\eSD} = x_{\opt}^{\eSD},\quad  r_{i+1}^{\eSD} = r(x_{\opt}^{\eSD}),\quad  \rho_{i+1}^{\eSD} = \rho(x_{i+1}^{\eSD}), 
	\end{gather*}
	Similarly, $\rho_{i+1}^{\eSD}$ can be recognized as $\rho_{\opt}^{\eSD}$ in Theorem~\ref{thm:linesearch} as we let
	\[
		p^{\eSD} = r_i,\quad
		S^{\eSD} = S_i.
	\]
	Thus, by \eqref{eq:thm:linesearch:rho} and \eqref{eq:thm:linesearch:beta},
	\begin{equation}\label{eq:thm:rate:compare:prf:delta:eSD}
		\delta_i^{\eSD}=\rho_i-\rho_{i+1}^{\eSD} = -\frac{|r_i^{\HH}r_i|^2}{[x_i^{\HH}\Phi_i^{\eSD}x_i][(s_i^{\eSD})^{\HH}\check F_i^{\eSD}s_i^{\eSD}]}.
	\end{equation}
	where
	\[
		s_i^{\eSD} = r_i - S_i(S_i^{\HH}\check F_i^{\eSD}S_i)^{\pinv}S_i^{\HH}\check F_i^{\eSD}r_i+S_iv_i^{\eSD}, 
		\qquad \check F_i^{\eSD} S_iv_i^{\eSD}\perp\subspan\{x_i,r_i,S_i,s_i^{\eSD}\}.
	\]
	The rest of the proof is to estimate the ratio of $\delta_i^{\eSD}$ and $\delta_i$.
	Let
	\[
		\kappa:=\frac{\delta_i^{\eSD}}{\delta_i}
		=
		\frac{x_i^{\HH}\Phi_ix_i}{x_i^{\HH}\Phi_i^{\eSD}x_i}
		\frac{s_i^{\HH}\check F_is_i}{(s_i^{\eSD})^{\HH}\check F_i^{\eSD}s_i^{\eSD}}.
	\]
	Clearly, $\kappa\le1$.

	First, we prove that
	\begin{equation}\label{eq:thm:rate:compare:prf:claim:inv}
		\text{$S_i^{\HH}F_{i+1}S_i$ and $S_i^{\HH}F_{i+1}^{\eSD}S_i$ are nonsingular.}
	\end{equation}
	Write
	\begin{alignat*}{2}
		T_i &= S_i(S_i^{\HH}\check F_iS_i)^{-1}S_i^{\HH}\check F_i &&= S_i(S_i^{\HH}F_{i+1}S_i)^{-1}S_i^{\HH}F_{i+1}(I-P_i),
		\\
		T_i^{\eSD} &= S_i(S_i^{\HH}\check F_i^{\eSD}S_i)^{-1}S_i^{\HH}\check F_i^{\eSD} &&= S_i(S_i^{\HH}F_{i+1}^{\eSD}S_i)^{-1}S_i^{\HH}F_{i+1}^{\eSD}(I-P_i^{\eSD}).
	\end{alignat*}
	Clearly, $P_iS_i=0$, $P_iT_i=T_iP_i=0$, $P_i^{\eSD}T_i^{\eSD}=T_i^{\eSD}P_i^{\eSD}=0$, and
	\[
		T_i^{\HH}\check F_i=T_i^{\HH}\check F_iT_i,\quad
	(I-T_i^{\HH})\check F_i(I-T_i)=\check F_i(I-T_i)=(I-T_i^{\HH})\check F_i.
	\]
	We have $v_i^{\eSD}=0$ and
	\[
		s_i^{\eSD} = r_i - S_i(S_i^{\HH}\check F_i^{\eSD}S_i)^{-1}S_i^{\HH}\check F_i^{\eSD}r_i=(I-T_i^{\eSD})r_i.
	\]
	On the other hand, it is easy to see that
	$\wtd S_i^{\HH}\check F_i\wtd S_i$ is nonsingular if and only if
	\begin{equation}\label{eq:thm:rate:compare:prf:claim:yFx}
		\tau_i :=x_{i-1}^{\HH}\check F_i(I-T_i)x_{i-1}\ne0,
	\end{equation}
	and when it is nonsingular, that
	\[
		(\wtd S_i^{\HH}\check F_i\wtd S_i)^{-1}=
		\begin{bmatrix}
			x_{i-1}^{\HH}\check F_ix_{i-1} & x_{i-1}^{\HH}\check F_iS_i \\
			S_i^{\HH}\check F_ix_{i-1} & S_i^{\HH}\check F_iS_i\\
		\end{bmatrix}^{-1} =
		\begin{bmatrix}
			\frac{1}{\tau_i } & -\frac{1}{\tau_i } w_i ^{\HH} \\
			-\frac{1}{\tau_i } w_i  & \frac{1}{\tau_i } w_i w_i ^{\HH}+(S_i^{\HH}\check F_iS_i)^{-1}
		\end{bmatrix}
		,
	\]
	where $w_i = (S_i^{\HH}\check F_iS_i)^{-1}S_i^{\HH}\check F_ix_{i-1}$ satisfying $S_iw_i =T_ix_{i-1}$. 
	Actually, \eqref{eq:thm:rate:compare:prf:claim:yFx} is guaranteed by the claim \eqref{eq:thm:rate:compare:prf:claim:kappa4:dd} below.
	Thus, $\wtd S_i^{\HH}\check F_i\wtd S_i$ is nonsingular, 
	\[
		\wtd S(\wtd S^{\HH}\check F_i\wtd S)^{-1}\wtd S^{\HH}
		= S_i(S_i^{\HH}\check F_iS_i)^{-1}S_i^{\HH}+\frac{1}{\tau_i } (I-T_i)x_{i-1}x_{i-1}^{\HH}(I-T^{\HH})
		,
	\]
	and
	\begin{align*}
		s_i &= r_i - \wtd S_i(\wtd S_i^{\HH}\check F_i\wtd S_i)^{-1}\wtd S_i^{\HH}\check F_ir_i
		\\ &= r_i - S_i(S_i^{\HH}\check F_iS_i)^{-1}S_i^{\HH}\check F_ir_i-\frac{1}{\tau_i} (I-T_i)x_{i-1}x_{i-1}^{\HH}(I-T_i^{\HH})\check F_ir_i
		\\& = (I-T_i)\left[r_i-\frac{x_{i-1}^{\HH}\check F_i(I-T_i)r_i}{\tau_i}x_{i-1}\right].
	\end{align*}
	Write
	\[
		e_i= T_i^{\eSD}r_i-T_ir_i,
		\quad
		\beta_i = x_{i-1}^{\HH}\check F_i(I-T_i)r_i,
	\]
	so that $s_i^{\eSD}+e_i=(I-T_i)r_i$ and
	\[
		s_i = s_i^{\eSD}+e_i -\frac{\beta_i}{\tau_i}(I-T_i)x_{i-1}.
	\]
	Let
	\begin{equation} \label{eq:thm:rate:compare:prf:compare}
		\kappa=\frac{\delta_i^{\eSD}}{\delta_i}
		=
		\frac{x_i^{\HH}\Phi_ix_i}{x_i^{\HH}\Phi_i^{\eSD}x_i}
		\frac{(s_i^{\eSD})^{\HH}\check F_is_i^{\eSD}}{(s_i^{\eSD})^{\HH}\check F_i^{\eSD}s_i^{\eSD}}
		\frac{(s_i^{\eSD}+e_i)^{\HH}\check F_i(s_i^{\eSD}+e_i)}{(s_i^{\eSD})^{\HH}\check F_is_i^{\eSD}}
		\frac{s_i^{\HH}\check F_is_i}{(s_i^{\eSD}+e_i)^{\HH}\check F_i(s_i^{\eSD}+e_i)}
		=: \kappa_1\kappa_2\kappa_3\kappa_4.
	\end{equation}
	First, observe that
	\[
		\kappa_1
		= \frac{1}{x_i^{\HH}\Phi_i^{\eSD}x_i}
		= \frac{1}{\sigma(x_i)+O(\delta_i^{\eSD})}
		= \frac{1}{1+O(\delta_i)}
		=1+O(\delta_i).
	\]
	We assume for now that
	\begin{equation}\label{eq:thm:rate:compare:prf:claim:kappa23}
		\kappa_2=1+O(\delta_i),\quad
		\kappa_3=1+O(\delta_i).
	\end{equation}
	For $\kappa_4$,
	since $(I-T_i^{\HH})\check F_i(I-T_i)=\check F_i(I-T_i)=(I-T_i^{\HH})\check F_i$, we get
	then
	\[
		\kappa_4
		= \frac{s_i^{\HH}\check F_is_i}{(s_i^{\eSD}+e_i)^{\HH}\check F_i(s_i^{\eSD}+e_i)}
		= \frac{r_i^{\HH}\check F_i(I-T_i)r_i-\frac{\beta_i^2}{\tau_i}}{r_i^{\HH}\check F_i(I-T_i)r_i}
		= 1-\frac{\beta_i^2}{\tau_ir_i^{\HH}\check F_i(I-T_i)r_i}.
	\]
	We claim that
	\begin{align}
		\tau_i&= -\left[1+O(\delta_{i-1}^{1/2})+O(\delta_i)\right]\delta_{i-1},
		\label{eq:thm:rate:compare:prf:claim:kappa4:dd}
		\\
		\beta_i&= \left[1+O(\delta_{i-1})+O(\delta_i\delta_{i-1}^{1/2})\right]\|r_i\|^2+\left[ O(\delta_{i-1})+O(\delta_i\delta_{i-1}^{1/2}) \right]\|r_i\|,
		\label{eq:thm:rate:compare:prf:claim:kappa4:dr}
	\end{align}
	and
	\begin{equation}\label{eq:thm:rate:compare:prf:claim:kappa4:rFr/rr}
		-r_i^{\HH}\check F_i(I-T_i)r_i\sim r_i^{\HH}r_i = O(\delta_i).
	\end{equation}
	Recall \eqref{eq:thm:rate:compare:prf:assumption}, namely $\delta_{i-1}>\delta_i^2$.
	Therefore,
	\[
		\tau_i= -\left[1+O(\delta_{i-1}^{1/2})\right]\delta_{i-1},
		\qquad
		\beta_i= \left[1+O(\delta_{i-1})\right]\|r_i\|^2+ O(\delta_{i-1}) \|r_i\|.
	\]
	Thus,
	\begin{align*}
		1-\kappa_4
		= \frac{\beta_i^2}{\tau_ir_i^{\HH}\check F_i(I-T_i)r_i}
		&= \frac{\left(O(\delta_{i-1})\|r_i\|+\left[1+O(\delta_{i-1})\right]\|r_i\|^2\right)^2}{-\delta_{i-1}\left[1+O(\delta_{i-1}^{1/2})\right]r_i^{\HH}\check F_i(I-T_i)r_i}
		\\&= \frac{\left[1+O(\delta_{i-1})\right]\|r_i\|^4+O(\delta_{i-1})\|r_i\|^3+O(\delta_{i-1}^2)\|r_i\|^2}{-\delta_{i-1}\left[1+O(\delta_{i-1}^{1/2})\right]r_i^{\HH}\check F_i(I-T_i)r_i}
		\\&= \frac{\left[1+O(\delta_{i-1}^{1/2})\right]\|r_i\|^4+O(\delta_{i-1})\|r_i\|^3+O(\delta_{i-1}^2)\|r_i\|^2}{-\delta_{i-1}r_i^{\HH}\check F_i(I-T_i)r_i}
		\\&= \frac{1+O(\delta_{i-1}^{1/2})}{\delta_{i-1}}\frac{\|r_i\|^4}{r_i^{\HH}\check F_i(I-T_i)r_i}+O(\delta_i^{1/2})+O(\delta_{i-1})
		.
	\end{align*}
	By \eqref{eq:thm:rate:compare:prf:delta:eSD} and \eqref{eq:thm:rate:compare:prf:compare},
	\[
		\frac{\delta_i^{\eSD}}{\kappa_1\kappa_2\kappa_3}=\frac{\|r_i\|^4}{r_i^{\HH}\check F_i(I-T_i)r_i},
	\]
	which implies
	\[
		1-\kappa_4
		= \frac{1+O(\delta_{i-1}^{1/2})}{\delta_{i-1}}\frac{\delta_i^{\eSD}}{\kappa_1\kappa_2\kappa_3}+O(\delta_i^{1/2})+O(\delta_{i-1})
		.
	\]
	Since $(1-\kappa_4)\kappa_1\kappa_2\kappa_3=\kappa_1\kappa_2\kappa_3-\kappa=1-\kappa+O(\delta_i)$,
	we obtain
	\[
		\frac{\delta_i^{\eSD}}{\delta_{i-1}}+O(\delta_i^{1/2})\sqrt{\frac{\delta_i^{\eSD}}{\delta_{i-1}}}+O(\delta_i^{1/2})+O(\delta_{i-1})+O(\delta_i)-(1-\kappa)=0
		,
	\]
	which implies
	\begin{align*}
		\frac{\delta_i^{\eSD}}{\delta_{i-1}}
		&= \left(\frac{1}{2}\left[-O(\delta_i^{1/2})\pm \sqrt{O(\delta_i)+O(\delta_i^{1/2})+O(\delta_{i-1})+4(1-\kappa)}\right]\right)^2
		\\&= O(\delta_i)+O(\delta_i^{1/2})+O(\delta_{i-1})+4(1-\kappa)+2 O(\delta_i^{1/2})\sqrt{O(\delta_i^{1/2})+O(\delta_{i-1})+(1-\kappa)}
		\\&= 1-\kappa+O(\delta_i^{1/2})+O(\delta_{i-1}).
	\end{align*}
	With
	\[
		\frac{1}{\delta_i}
		=\frac{1+O(\delta_i^{1/2})+O(\delta_{i-1})}{\delta_i^{\eSD}}- \frac{1}{\delta_{i-1}}
		,
	\]
	we arrive at \eqref{eq:thm:rate:compare}.

	We defer the proofs of the claims \eqref{eq:thm:rate:compare:prf:claim:inv}, \eqref{eq:thm:rate:compare:prf:claim:kappa23}, \eqref{eq:thm:rate:compare:prf:claim:kappa4:dd}, \eqref{eq:thm:rate:compare:prf:claim:kappa4:dr}, and \eqref{eq:thm:rate:compare:prf:claim:kappa4:rFr/rr} to Appendix~\ref{sec:claim-proof}, as these consist of rather technical calculations and estimations.
\end{proof}

We summarize the findings of this section in the following theorem.
\begin{theorem}\label{thm:rate}
	Suppose $\lambda_1\le\rho_0<\lambda_2$.
	Let $\{\rho_i\}$ and $\{\rho_i^{\eSD}\}$ be produced by $\LOCG(1,m_e)$ and $\SD(1,m_e)$ with a fixed preconditioner $K\succ0$, respectively.
	Assume that $Z_i^{\HH}K^{1/2}F'(\lambda_1)K^{1/2}Z_i\succ0$.
	If $\rho_{i-1}-\lambda_1$ is sufficiently small,
	provided
	\[
		\rho_{i+1}^{\eSD}-\lambda_1\le \eta_{\eSD}(\rho_i^{\eSD}-\lambda_1)+O((\rho_i^{\eSD}-\lambda_1)^{3/2}),\quad\text{for all $i$ and a given $\eta_{\eSD}<1$},
	\]
	then
	\begin{equation}\label{eq:thm:rate}
		\rho_{i+1}-\lambda_1\le \eta^2(\rho_{i-1}-\lambda_1)+O((\rho_{i-1}-\lambda_1)^{3/2}),
	\end{equation}
	where
	\[
		\eta=\frac{\eta_{\eSD}}{2-\eta_{\eSD}}.
	\]
\end{theorem}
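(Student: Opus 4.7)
The plan is to derive \eqref{eq:thm:rate} by combining the comparison identity of Theorem~\ref{thm:rate:compare} with the hypothesized one-step rate of $\SD(1,m_e)$, followed by an elementary scalar minimization. Throughout, I would write $\varepsilon_i := \rho_i - \lambda_1$, $\delta_i := \rho_i - \rho_{i+1}\ge 0$, and $\delta_i^{\eSD} := \rho_i - \rho_{i+1}^{\eSD}$, where $\rho_{i+1}^{\eSD}$ denotes (as in Theorem~\ref{thm:rate:compare}) the minimum of $\rho(x)$ over $\krylov_{m_e}(KF(\rho_i),x_i)$, i.e.\ the Rayleigh quotient produced by one SD step started at the current LOBPeCG iterate $x_i$. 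The SD hypothesis, reinterpreted as a uniform one-step contraction valid at any admissible starting vector with sufficiently small $\varepsilon$, applied at $x = x_i$ supplies
\[
	\delta_i^{\eSD}\,\ge\,(1-\eta_{\eSD})\,\varepsilon_i \,+\, O(\varepsilon_i^{3/2}).
\]

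Next, I would split on the dichotomy in Theorem~\ref{thm:rate:compare}. Its exceptional alternative $\delta_i \ge \sqrt{\delta_{i-1}}$ is benign: since $\delta_i \le \varepsilon_i \le \varepsilon_{i-1}$, it forces $\delta_{i-1}\le\varepsilon_{i-1}^2$, so the previous step is asymptotically negligible, and the already large $\delta_i$ makes \eqref{eq:thm:rate} follow by a direct comparison with the generic bound established below (applied, if necessary, to the pair of iterations bracketing the exceptional step). In the generic alternative, the identity \eqref{eq:thm:rate:compare} rearranges to $\delta_i\delta_{i-1}/(\delta_i+\delta_{i-1}) = (1+o(1))\,\delta_i^{\eSD}$; substituting the SD lower bound yields the single leading-order constraint
\[
	\frac{\delta_i\,\delta_{i-1}}{\delta_i+\delta_{i-1}} \,\ge\, (1-\eta_{\eSD})\,(\varepsilon_{i-1}-\delta_{i-1})\,\bigl[1+O(\varepsilon_{i-1}^{1/2})\bigr].
\]

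The core algebraic step is then to minimize $\varepsilon_{i+1} = \varepsilon_{i-1} - \delta_{i-1} - \delta_i$ subject to that constraint. Setting $\gamma = 1-\eta_{\eSD}$, $a = \delta_{i-1}/\varepsilon_{i-1}$, $b = \delta_i/\varepsilon_{i-1}$, the constraint reads $ab/(a+b)\ge \gamma(1-a)$, which (in the feasible range $a > \gamma/(1+\gamma)$) gives $b \ge \gamma(1-a)a/[(1+\gamma)a-\gamma]$ and hence $a+b \ge a^2/[(1+\gamma)a-\gamma]$. A short calculus check shows the right-hand side is minimized at $a_\star = 2\gamma/(1+\gamma) = 1-\eta$ with value $4\gamma/(1+\gamma)^2 = 1-\eta^2$. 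Therefore $\delta_{i-1}+\delta_i \ge (1-\eta^2)\varepsilon_{i-1}$ at leading order, which is precisely \eqref{eq:thm:rate}.

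The main obstacle will be the bookkeeping of the several small-order remainders: the $O(\delta_i^{1/2})$ and $O(\delta_{i-1})$ errors from Theorem~\ref{thm:rate:compare} and the $O(\varepsilon_i^{3/2})$ error from the SD hypothesis must be propagated through the constraint and the minimization without inflating beyond $O(\varepsilon_{i-1}^{3/2})$ in the final bound. This requires a uniform stability estimate for the minimum of $a\mapsto a^2/[(1+\gamma)a-\gamma]$ under $O(\varepsilon_{i-1}^{1/2})$ perturbations of the constraint near $a_\star$, together with a short argument ruling out, in the asymptotic regime, the possibility that $a$ lies far from a neighborhood of $a_\star$ (which would either contradict monotonicity or place us in the exceptional branch treated separately).
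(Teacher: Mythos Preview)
Your approach is correct and is precisely the argument the paper intends: the paper gives no proof of its own and simply cites Ovtchinnikov \cite[Theorem~4.2]{ovtc:08:jacobi:I}, whose proof proceeds exactly as you outline---combine the comparison identity \eqref{eq:thm:rate:compare} with the one-step SD contraction, then minimize $a+b$ subject to $ab/(a+b)\ge\gamma(1-a)$ to obtain the constant $1-\eta^2=4\gamma/(1+\gamma)^2$.

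Two small points where your write-up can be tightened. First, your treatment of the exceptional branch $\delta_i\ge\sqrt{\delta_{i-1}}$ is more delicate than necessary. You correctly derive $\delta_{i-1}\le\varepsilon_{i-1}^2$, but the cleaner observation is that this branch simply cannot occur once $\varepsilon_{i-1}$ is small enough: since the $\LOCG$ search space contains the $\SD$ search space at each step, $\delta_{i-1}\ge\delta_{i-1}^{\eSD}\ge(1-\eta_{\eSD})\varepsilon_{i-1}+O(\varepsilon_{i-1}^{3/2})$, which is incompatible with $\delta_{i-1}\le\varepsilon_{i-1}^2$ for $\varepsilon_{i-1}<1-\eta_{\eSD}$ (up to the lower-order correction). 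So no separate argument ``bracketing the exceptional step'' is needed.

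Second, your last paragraph's worry about $a$ lying far from $a_\star$ is unfounded at leading order: the inequality $a+b\ge a^2/[(1+\gamma)a-\gamma]\ge 1-\eta^2$ holds for \emph{every} feasible $a\in(\gamma/(1+\gamma),1]$, not just near $a_\star$, so no localization is required. The only genuine care is propagating the $O(\varepsilon_{i-1}^{1/2})$ perturbation of the constraint through this pointwise inequality, which is straightforward since $a\mapsto a^2/[(1+\gamma)a-\gamma]$ is bounded away from zero on the feasible interval.
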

\begin{proof}
	The proof is exactly the same as its analogue by Ovtchinnikov \cite[Theorem~4.2]{ovtc:08:jacobi:I}.
\end{proof}

\section{Application to Definite Pairs and Hyperbolic Quadratic Polynomials}
\subsection{Definite Matrix Pair}
As we stated in Section~\ref{sec:intro}, the definite pair $F(\lambda)=\lambda B-A$ for the special case that $F(\lambda_0)\prec0$, $\interval=(\lambda_0,+\infty)$, and the smallest positive-type eigenvalue is chosen here. This setting satisfies the assumptions needed to apply tthe results from the previous section.
However, with little effort, we see that any definite pair or any type of eigenvalues could be transformed into the case mentioned before.
For example, for $F(\lambda_0)\prec0$, $\interval=(-\infty,\lambda_0)$, we consider $\what F(\lambda)=F(-\lambda)$ and $\what{\interval}=(-\lambda_0,+\infty)$;
for $F(\lambda_0)\succ0$, $\interval=(\lambda_0,+\infty)$, we consider $\what F(\lambda)=-F(\lambda)$ and $\what{\interval}=\interval$.

\begin{theorem}\label{thm:defpair}
	Let $\{\rho_i\},\{x_i\}$ be produced by $\LOCG(1,m_e)$ with a fixed preconditioner $K\succ0$ for the definite matrix pair $F(\lambda)=\lambda B - A$.
	Suppose $\lambda_1^+\le\rho_0<\lambda_2^+$.
	Assume that $Z_i^{\HH}K^{1/2}F'(\lambda_1)K^{1/2}Z_i\succ0$.
	\begin{enumerate}
		\item As $i\to\infty$, $\rho_i$ monotonically converges to $\lambda_1^+$, and $x_i$ converges to the corresponding eigenvector in direction, i.e., $F(\rho_i)x_i\to0$.
		\item Denote by $\gamma$ and $\Gamma$ the smallest and largest positive eigenvalue of the matrix $-KF(\lambda_1)$.
			If $\rho_i-\lambda_1^+$ is sufficiently small, then
			\begin{equation} \label{eq:thm:defpair-HQEP}
				\rho_{i+1} -\lambda_1^+\le\eta^2(\rho_{i-1}-\lambda_1^+)+O((\rho_{i-1}-\lambda_1^+)^{3/2}),
			\end{equation}
			where
			\[
				\eta=\frac{2}{\Delta^{2m_e}+\Delta^{-2m_e}}, \quad
				\Delta=\frac{\sqrt{\kappa}+1}{\sqrt{\kappa}-1}, \quad
				\kappa=\frac{\Gamma}{\gamma}.
			\]
	\end{enumerate}
\end{theorem}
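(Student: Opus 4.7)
The plan is to deduce both parts from the generic framework of Section~2. For Part~1, I would verify that the definite-pair data $F(\lambda)=\lambda B-A$, $\domain=\{x:x^{\HH}Bx>0\}$, proper constraint $x^{\HH}Bx=1$, and $\interval=(\lambda_0,+\infty)$ satisfy all the assumptions from Section~\ref{sec:intro}: on $\domain$ one has $\sigma(x)=x^{\HH}Bx>0$, the unique root of $x^{\HH}F(\lambda)x=0$ in $\interval$ equals $x^{\HH}Ax/x^{\HH}Bx$, and the Courant-Fischer and trace-min principles for definite pairs are classical. The normalization $x_i^{\HH}Bx_i=1$ keeps $\{x_i\}$ bounded, hence Theorem~\ref{thm:convergence:nb=1} applies and delivers both $\rho_i\to\lambda_1^+$ monotonically and $F(\rho_i)x_i\to 0$.

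For the rate, I would invoke Theorem~\ref{thm:rate}, which reduces the whole problem to producing an asymptotic rate $\eta_{\eSD}$ for $\SD(1,m_e)$ and then setting $\eta=\eta_{\eSD}/(2-\eta_{\eSD})$. The target identity
\[
\frac{\eta_{\eSD}}{2-\eta_{\eSD}}=\frac{2}{\Delta^{2m_e}+\Delta^{-2m_e}}
\]
is equivalent, by direct algebra with $a=\Delta^{m_e}$, $b=\Delta^{-m_e}$, $ab=1$ (so $2-\eta_{\eSD}=2(a^2+b^2)/(a+b)^2$), to
\[
\eta_{\eSD}=\Bigl(\frac{2}{\Delta^{m_e}+\Delta^{-m_e}}\Bigr)^2,
\]
so the whole task is to establish this rate. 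I would proceed by the standard change of variables $y=K^{-1/2}x$, which turns the preconditioned Krylov subspace $\krylov_{m_e}(KF(\rho_i),x_i)$ into $K^{1/2}\krylov_{m_e}(K^{1/2}F(\rho_i)K^{1/2},K^{-1/2}x_i)$, and then work with the Hermitian operator $M_i:=-K^{1/2}F(\rho_i)K^{1/2}$. In the eigenbasis of the pair, the restriction of $M_i$ to the complement of the target direction is close to $-K^{1/2}F(\lambda_1^+)K^{1/2}$, whose positive spectrum lies in $[\gamma,\Gamma]$. The extended SD step minimizes the Rayleigh quotient over polynomials $p\in\mathbb{P}_{m_e}$ applied to $K^{-1/2}x_i$ through $M_i$, and choosing the shifted/scaled Chebyshev polynomial of degree $m_e$ on $[\gamma,\Gamma]$ with $p(0)=1$ delivers the classical bound $2/(\Delta^{m_e}+\Delta^{-m_e})$ on the transverse component of the iterate. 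Since the Rayleigh quotient gap $\rho(\cdot)-\lambda_1^+$ is to leading order quadratic in this transverse component, one obtains the square, giving exactly the desired $\eta_{\eSD}$ up to the permitted $O((\rho_i^{\eSD}-\lambda_1^+)^{3/2})$ error.

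The main obstacle is controlling the dependence of $M_i$ on $\rho_i$ cleanly: because the Krylov operator uses $\rho_i$ rather than $\lambda_1^+$, the spectrum of $M_i$ lies only within a perturbation $O(\rho_i-\lambda_1^+)$ of $[\gamma,\Gamma]$, and the small component of $x_i$ along $u_1$ leaks through each application of $M_i$. Both effects must be shown to contribute only to the $O((\rho_i^{\eSD}-\lambda_1^+)^{3/2})$ remainder, in line with the hypothesis of Theorem~\ref{thm:rate}. Once the Chebyshev bound and the quadratic Rayleigh-quotient estimate are set up on the transverse invariant subspace, substituting $\eta_{\eSD}$ into $\eta=\eta_{\eSD}/(2-\eta_{\eSD})$ and applying the identity $ab=1$ yields exactly the stated $\eta=2/(\Delta^{2m_e}+\Delta^{-2m_e})$ and hence \eqref{eq:thm:defpair-HQEP}.
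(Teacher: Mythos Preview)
Your approach matches the paper's: verify the hypotheses of Theorem~\ref{thm:convergence:nb=1} for Part~1, and for Part~2 reduce via Theorem~\ref{thm:rate} to an $\SD(1,m_e)$ rate obtained from a Chebyshev polynomial argument (the paper cites Golub--Ye and Li for this, you sketch it directly; the algebraic identity $\eta_{\eSD}/(2-\eta_{\eSD})=2/(\Delta^{2m_e}+\Delta^{-2m_e})$ with $\eta_{\eSD}=(2/(\Delta^{m_e}+\Delta^{-m_e}))^2$ is correct).

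One genuine gap in Part~1: the sentence ``the normalization $x_i^{\HH}Bx_i=1$ keeps $\{x_i\}$ bounded'' is not justified, and as stated it is false in general, because for a definite pair $B$ may be indefinite, so the quadric $x^{\HH}Bx=1$ is unbounded. The paper closes this by using the definiteness of $F(\lambda_-)=\lambda_-B-A\prec0$: from $x_i^{\HH}(\lambda_-B-A)x_i=\lambda_--\rho_i$ and $\rho_i\le\rho_0$ one gets $\|x_i\|^2\le(\rho_0-\lambda_-)/\lambda_{\min}(A-\lambda_-B)$. You should also explicitly check the stationary-point hypothesis of Theorem~\ref{thm:convergence:nb=1}, which the paper does via the Lagrangian, since that theorem assumes it rather than derives it.
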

\begin{proof}
	For a definite matrix pair,
	the optimization problem \eqref{eq:iteration-optimize} is
	\begin{equation*} \label{eq:iteration-optimize:defpair}
		\rho_{i+1}= \rho(Z_iy_i) = \min_{
			\begin{subarray}{c}
				\text{$y^{\HH}Z_i^{\HH}BZ_iy=1$}
			\end{subarray}
		}y^{\HH}Z_i^{\HH}AZ_iy.
	\end{equation*}
	Using Lagrangian multipliers, it is equivalent to
	\begin{equation*} \label{eq:iteration-optimize:lagrange:defpair}
		\rho_{i+1}= \min {\cal L}(y,\mu)=\min
		y^{\HH}Z_i^{\HH}AZ_iy-\mu(y^{\HH}Z_i^{\HH}BZ_iy-1).
	\end{equation*}
	The minimal point $(y_i,\mu_i)$ must satisfy:
	\begin{subequations}\label{eq:iteration-optimize:langrange:ymu:defpair}
		\begin{align}
			\frac{\partial {\cal L}(y_i,\mu_i)}{\partial y} &= 2Z_i^{\HH}AZ_iy_i-2\mu_i Z_i^{\HH}BZ_iy_i=0,\label{eq:iteration-optimize:langrange:y:defpair}\\
			\frac{\partial {\cal L}(y_i,\mu_i)}{\partial \mu} &= y_i^{\HH}Z_i^{\HH}BZ_iy_i-1=0.\label{eq:iteration-optimize:langrange:mu:defpair}
		\end{align}
	\end{subequations}
	Left multiplying \eqref{eq:iteration-optimize:langrange:y:defpair} by $y_i^{\HH}$ gives $\mu_i=\rho(Z_iy_i)$,
	and then $Z_i^{\HH}r(Z_iy_i)=Z_i^{\HH}F(\rho(Z_iy_i))Z_iy_i=0$.
	Thus, $\frac{\diff \rho(Z_iy_i)}{\diff y}=Z_i^{\HH}\nabla\rho(Z_iy_i)=0$, which means $y_i$ is a stationary point of $\rho(Z_iy)$.
	Besides, under the constraint $x_i^{\HH}Bx_i=1$,
	$$x_i^{\HH}(\lambda_-B-A)x_i=(\lambda_--\rho_i)x_i^{\HH}Bx_i=\lambda_--\rho_i.$$ Since $\lambda_-B-A\prec0$,
	$\|x_i\|\le\frac{\rho_i-\lambda_-}{\lambda_{\min}(A-\lambda_-B)}\le\frac{\rho_0-\lambda_-}{\lambda_{\min}(A-\lambda_-B)}$,
	which implies that $\|x_i\|$ is bounded.
	To sum up, by Theorem~\ref{thm:convergence:nb=1}, Item~1 holds.

	For Item~2, first, under the assumption $Z_i^{\HH}K^{1/2}F'(\lambda_1)K^{1/2}Z_i=Z_i^{\HH}K^{1/2}BK^{1/2}Z_i\succ0$, it is easy to check that Theorem~3.4 in Golub and Ye \cite{goye:02} still holds, even if the matrix pair $(A,B)$ is definite, rather than restricted to the case that $B\succ0$.
	Then we choose the $m$th Chebyshev polynomial of the first kind as the polynomial $p$ in the theorem.
	Similarly to the discussions by Li \cite[Section~2]{li:06c08}, an upper bound of $\epsilon_m$ in the theorem results.
	Then, together with this theorem, by Theorem~\ref{thm:rate}, Item~2 holds.
\end{proof}

\subsection{Hyperbolic Quadratic Eigenvalue Problems}
As we stated in Section~\ref{sec:intro}, the hyperbolic quadratic polynomial $F(\lambda)=\lambda^2A+\lambda B+C$ for the special case that $\interval=(\lambda_0,+\infty)$, and the smallest positive-type eigenvalue is chosen as what we need, satisfies the assumptions on a generic $F(\lambda)$.
However, with little effort, we know the negative-type eigenvalue or the largest eigenvalue could be transformed into the case mentioned before.
For example, for the largest eigenvalue lying in $\interval=(-\infty,\lambda_0)$, we consider $\what F(\lambda)=F(-\lambda)$ and $\what{\interval}=(-\lambda_0,+\infty)$;
for the largest eigenvalue lying in $\interval=(\lambda_0,+\infty)$, we consider $\what F(\lambda)=-F(-\lambda)$ and $\what{\interval}=(-\infty,\lambda_0)$.

\begin{theorem}\label{thm:HQEP}
	Theorem~\ref{thm:defpair} holds for the hyperbolic quadratic polynomial $$F(\lambda)=\lambda^2 A +\lambda B+C.$$
\end{theorem}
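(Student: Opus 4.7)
The plan is to replicate, step by step, the argument used for Theorem~\ref{thm:defpair}, with the appropriate substitutions for the hyperbolic quadratic setting. Since the paper already records (Section~\ref{sec:intro}, item~2) that for $F(\lambda)=\lambda^2 A+\lambda B+C$ with $A\succ 0$ and $F(\lambda_0)\prec 0$ the choices $\interval=(\lambda_0,+\infty)$, $\domain=\mathbb{C}^n$, and the proper constraint $X^{\HH}AX=I$ satisfy the generic assumptions, the machinery of Section~2 (Theorems~\ref{thm:convergence:nb=1}, \ref{thm:linesearch}, \ref{thm:rate:compare}, \ref{thm:rate}) is directly available. The task reduces to supplying, in each half of Theorem~\ref{thm:defpair}, the two problem-specific inputs that the generic theorems ask for.

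For Item~1, I would first recast $\rho_{i+1}=\min_y\rho(Z_iy)$ (where $\rho$ is the larger root of $y^{\HH}Z_i^{\HH}F(\lambda)Z_iy=0$ in $\interval$) as a constrained problem under $y^{\HH}Z_i^{\HH}AZ_iy=1$, and run the Lagrangian argument as in the proof of Theorem~\ref{thm:defpair}. Differentiating the constraint $y^{\HH}(\rho(Z_iy)^2 Z_i^{\HH}AZ_i+\rho(Z_iy)Z_i^{\HH}BZ_i+Z_i^{\HH}CZ_i)y=0$ at the minimizer and combining with the Lagrange condition should yield $Z_i^{\HH}F(\rho_{i+1})Z_iy_i=Z_i^{\HH}r_{i+1}=0$, so that $y_i$ is a stationary point of $\rho(Z_iy)$ in the sense needed by Theorem~\ref{thm:convergence:nb=1}. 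Boundedness of $\{x_i\}$ is immediate: $x_i^{\HH}Ax_i=1$ together with $A\succ 0$ gives $\|x_i\|^2\le 1/\lambda_{\min}(A)$. With these two inputs, Theorem~\ref{thm:convergence:nb=1} delivers $\rho_i\searrow\lambda_1^+$ and $F(\rho_i)x_i\to 0$.

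For Item~2, the strategy is to feed Theorem~\ref{thm:rate} a preconditioned SD convergence rate of the form $\rho_{i+1}^{\eSD}-\lambda_1^+\le\eta_{\eSD}(\rho_i^{\eSD}-\lambda_1^+)+O((\rho_i^{\eSD}-\lambda_1^+)^{3/2})$ with $\eta_{\eSD}=2/(\Delta^{2m_e}+\Delta^{-2m_e})$. To obtain this, I would borrow the hyperbolic analogue of Golub--Ye's polynomial-filter estimate developed in \cite{liangL2015hyperbolic}: the key point is that near $\lambda_1^+$ the positive-type Rayleigh quotient behaves, up to higher order, like a generalized Rayleigh quotient whose ``$B$'' is $F'(\lambda_1^+)$, which is positive definite on the relevant subspace by the assumption $Z_i^{\HH}K^{1/2}F'(\lambda_1^+)K^{1/2}Z_i\succ 0$. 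Applying the Chebyshev polynomial of the first kind of degree $m_e$ on the interval $[\gamma,\Gamma]$ of positive eigenvalues of $-KF(\lambda_1^+)$, exactly as in Li~\cite{li:06c08}, produces $\eta_{\eSD}$ above, and then Theorem~\ref{thm:rate} gives \eqref{eq:thm:defpair-HQEP} with $\eta=\eta_{\eSD}/(2-\eta_{\eSD})$.

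The main obstacle is the Lagrangian step, which was algebraically transparent for the definite pair (because $\rho(x)=x^{\HH}Ax/x^{\HH}Bx$ is a literal ratio of quadratic forms) but for the hyperbolic case requires careful bookkeeping of the implicit differentiation of the quadratic scalar equation $y^{\HH}Z_i^{\HH}F(\rho)Z_iy=0$ against the constraint $y^{\HH}Z_i^{\HH}AZ_iy=1$, using $\sigma(Z_iy)>0$ to invert the coefficient of $d\rho$. A secondary obstacle is justifying the Chebyshev-type SD bound in the hyperbolic setting rather than invoking it as a black box; here one leans on the fact that $F(\rho_i)-(\rho_i-\lambda_1^+)F'(\lambda_1^+)$ is $O((\rho_i-\lambda_1^+)^2)$ so that the HQEP residual iteration agrees, up to the tolerated $O((\rho_i-\lambda_1^+)^{3/2})$ term, with the definite-pair iteration on $(F(\lambda_1^+)+(\rho-\lambda_1^+)F'(\lambda_1^+),F'(\lambda_1^+))$ that the Golub--Ye argument already handles. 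Both of these issues are essentially verifications rather than new ideas, which is why the theorem can be stated as a transfer of Theorem~\ref{thm:defpair}.
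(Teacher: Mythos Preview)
Your proposal is correct and follows essentially the same route as the paper: the Lagrangian argument under the constraint $y^{\HH}Z_i^{\HH}AZ_iy=1$ (here the key observation is that left-multiplying the first-order condition by $y_i^{\HH}$ forces the multiplier $\mu_i=0$, whence $Z_i^{\HH}r_{i+1}=0$) together with the trivial bound $\|x_i\|^2\le 1/\lambda_{\min}(A)$ yields Item~1 via Theorem~\ref{thm:convergence:nb=1}, and Item~2 is obtained by feeding Theorem~\ref{thm:rate} the $\SD(1,m_e)$ rate from \cite[Theorem~9.1]{liangL2015hyperbolic}. One bookkeeping slip: the quantity $2/(\Delta^{2m_e}+\Delta^{-2m_e})$ is already the final $\eta$ in \eqref{eq:thm:defpair-HQEP}, not $\eta_{\eSD}$; the Chebyshev bound for $\SD$ produces an $\eta_{\eSD}$ whose transform $\eta_{\eSD}/(2-\eta_{\eSD})$ equals that expression, so be careful not to apply the transform twice.
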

\begin{proof}
	The optimization problem \eqref{eq:iteration-optimize} is
	\begin{equation*} \label{eq:iteration-optimize:HQEP}
		\rho_{i+1}= \rho(Z_iy_i) = \min_{
			\begin{subarray}{c}
				\text{$y^{\HH}Z_i^{\HH}AZ_iy=1$}
			\end{subarray}
		}\rho(Z_iy).
	\end{equation*}
	Using Lagrangian multipliers, it is equivalent to
	\begin{equation*} \label{eq:iteration-optimize:lagrange:HQEP}
		\rho_{i+1}= \min {\cal L}(y,\mu)=\min
		\rho(Z_iy)-\mu(y^{\HH}Z_i^{\HH}AZ_iy-1).
	\end{equation*}
	The minimal point $(y_i,\mu_i)$ must satisfy:
	\begin{subequations}\label{eq:iteration-optimize:langrange:ymu:HQEP}
		\begin{align}
			\frac{\partial {\cal L}(y_i,\mu_i)}{\partial y} &= -2\frac{Z_i^{\HH}r(Z_iy_i)}{\sigma(Z_iy_i)}-2\mu_i Z_i^{\HH}AZ_iy_i=0,\label{eq:iteration-optimize:langrange:y:HQEP}
			\\
			\frac{\partial {\cal L}(y_i,\mu_i)}{\partial \mu} &= y_i^{\HH}Z_i^{\HH}AZ_iy_i-1=0.\label{eq:iteration-optimize:langrange:mu:HQEP}
		\end{align}
	\end{subequations}
	Left multiplying \eqref{eq:iteration-optimize:langrange:y:HQEP} by $y_i^{\HH}$ gives $\mu_i=0$,
	and then $Z_i^{\HH}r(Z_iy_i)=0$.
	Thus, $\frac{\diff \rho(Z_iy_i)}{\diff y}=Z_i^{\HH}\nabla\rho(Z_iy_i)=0$, which means $y_i$ is a stationary point of $\rho(Z_iy)$.
	Besides, under the constraint $x_i^{\HH}Ax_i=1$, $\|x_i\|\le\frac{1}{\lambda_{\min}(A)}$ and then $\|x_i\|$ is bounded.
	To sum up, by Theorem~\ref{thm:convergence:nb=1}, Item~1 holds.

	Item~2 holds by Theorem~\ref{thm:rate}, together with a theorem by Liang and Li\cite[Theorem~9.1]{liangL2015hyperbolic}.
\end{proof}

\section{Numerical Examples}
In the section, we will provide two examples to illustrate the proven convergence rate.
We use the code by Li \cite{li:13:LOBPCGcode} and make small modifications to it to do calculations in the examples below.
All experiments are done in MATLAB R2017a under the Windows 10 Professional 64-bit operating system on a PC with a Intel Core i7-8700 processor at 3.20GHz and 64GB RAM.

\begin{example}[{\cite[Example~12.1]{liangL2015hyperbolic}}]\label{eg:wiresaw1}
	This is the problem \verb|Wiresaw1| in the collection NLEVP \cite{behm:11}.
	It is actually a gyroscopic quadratic eigenvalue problem coming from the vibration analysis of a wiresaw \cite{weka:00}, which we can transform to the following hyperbolic quadratic matrix polynomial:
	\begin{gather*}
		A = \frac{1}{2}I_n, \quad
		C = \frac{(\nu^2-1)\pi^2}{2}\diag(1^2,2^2,\dots,n^2),\\
		B = (b_{ij})\quad \text{with}\quad
		b_{ij}=
		\begin{cases}
			\nu\sqrt{-1}\dfrac{4ij}{i^2-j^2}, & \text{if $i+j$ is odd},\\
			0, & \text{otherwise},
		\end{cases}
	\end{gather*}
	where $\nu$ is a real nonnegative parameter related to the speed of the wire.

	In this example, we use $\LOCG(1,1)$ in Algorithm~\ref{alg:LOBPeCG} with $X_0=\texttt{randn}(n,1)$ for $n=1000$, $\nu=0.1$, with the preconditioner $K=C^{-1}$ to get the smallest positive-type eigenvalue of the problem.
	For the projected problem in every step, the stopping criteria is that the normalized residual is no bigger than $0.1$ or the number of CG steps reaches $10$.
	In Figure~\ref{fig:wiresaw1}, the final approximation is treated as the exact eigenvalue $\lambda_1$, and then: the solid line is the real approximation error; the dash line is the result predicted by (compared with \eqref{eq:thm:defpair-HQEP})
	\[
		\rho_{i+1}-\lambda_1=\frac{2}{\Delta^{2}+\Delta^{-2}}(\rho_i-\lambda_1),\quad
		\Delta=\frac{\sqrt{\kappa}+1}{\sqrt{\kappa}-1}, \quad
		\kappa=\frac{\Gamma}{\gamma}.
	\]
	\begin{figure}[t]
		\centering
		\includegraphics[width=.8\textwidth]{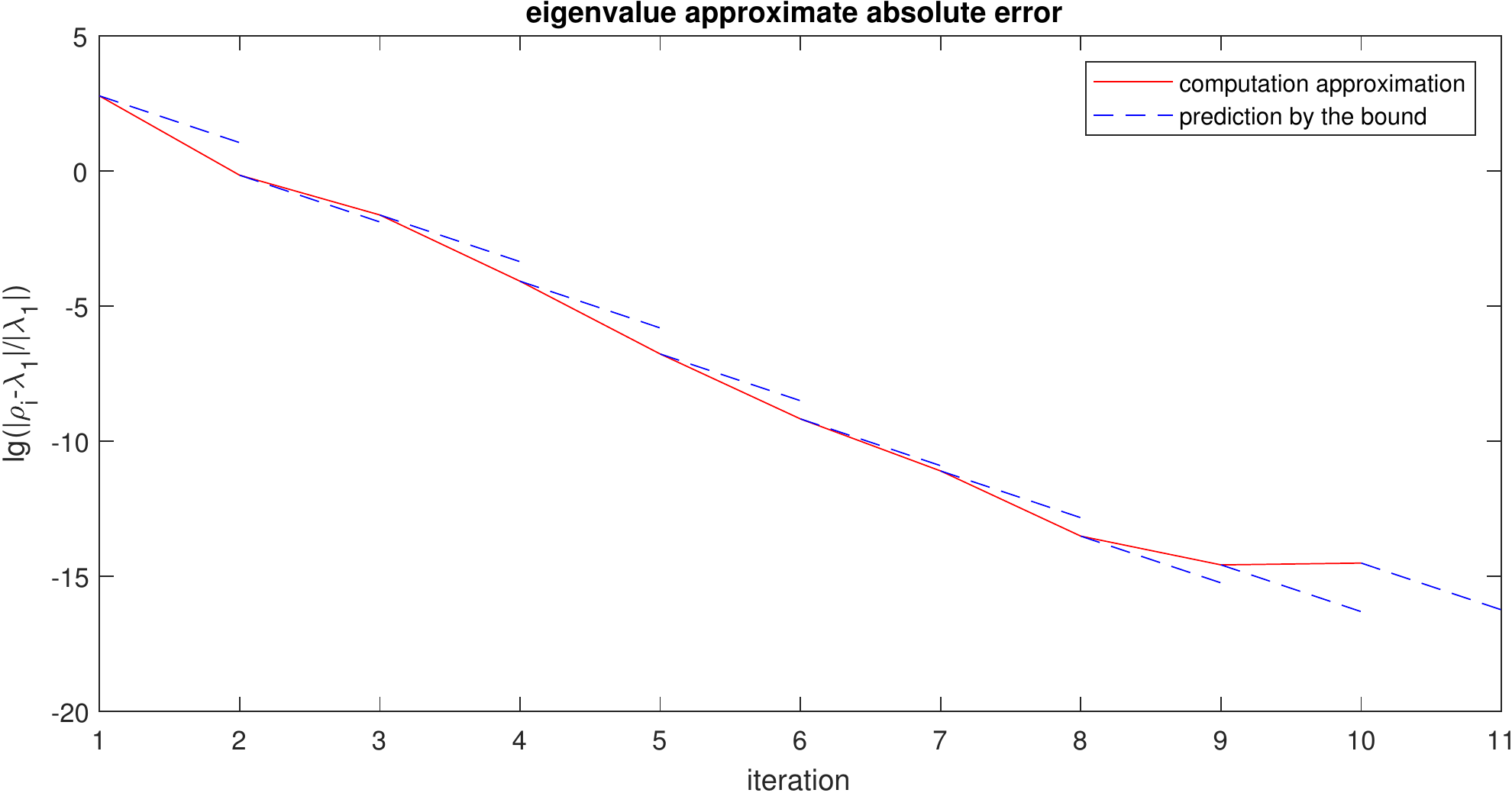}
		\caption{Example~\ref{eg:wiresaw1}: calculation and prediction for $\LOCG(1,1)$.}\label{fig:wiresaw1}
	\end{figure}

	At least we see that in this example, this kind of prediction is appropriate.
\end{example}

\begin{example}\label{eg:gen_hyper2}
	This example is constructed by the MATLAB function \verb|gen_hyper2| in the collection NLEVP \cite{behm:11}. 
	Here, we generate a small-scale problem of size $10$ with eigenvalues $\pm1,\pm2,\dots,\pm10$, and a mid-scale problem of size $1000$ with eigenvalues $\pm1,\pm2,\dots,\pm1000$. The other parameters are chosen randomly.
	Thus, we know the exact eigenvalue $\lambda_1=1$.

	We use different values of $m$ for $\SD(1,m)$ and $\LOCG(1,m)$ to calculate the smallest positive-type eigenvalue, with the preconditioner $K=C^{-1}$.
	For the projected problem in every step, the stopping criteria is that the normalized residual is no bigger than $0.1$ or the number of CG steps reaches $10$.
	\begin{figure}[t]
		\centering
		\subfloat[$n=10$]{
			\includegraphics[width=.8\textwidth]{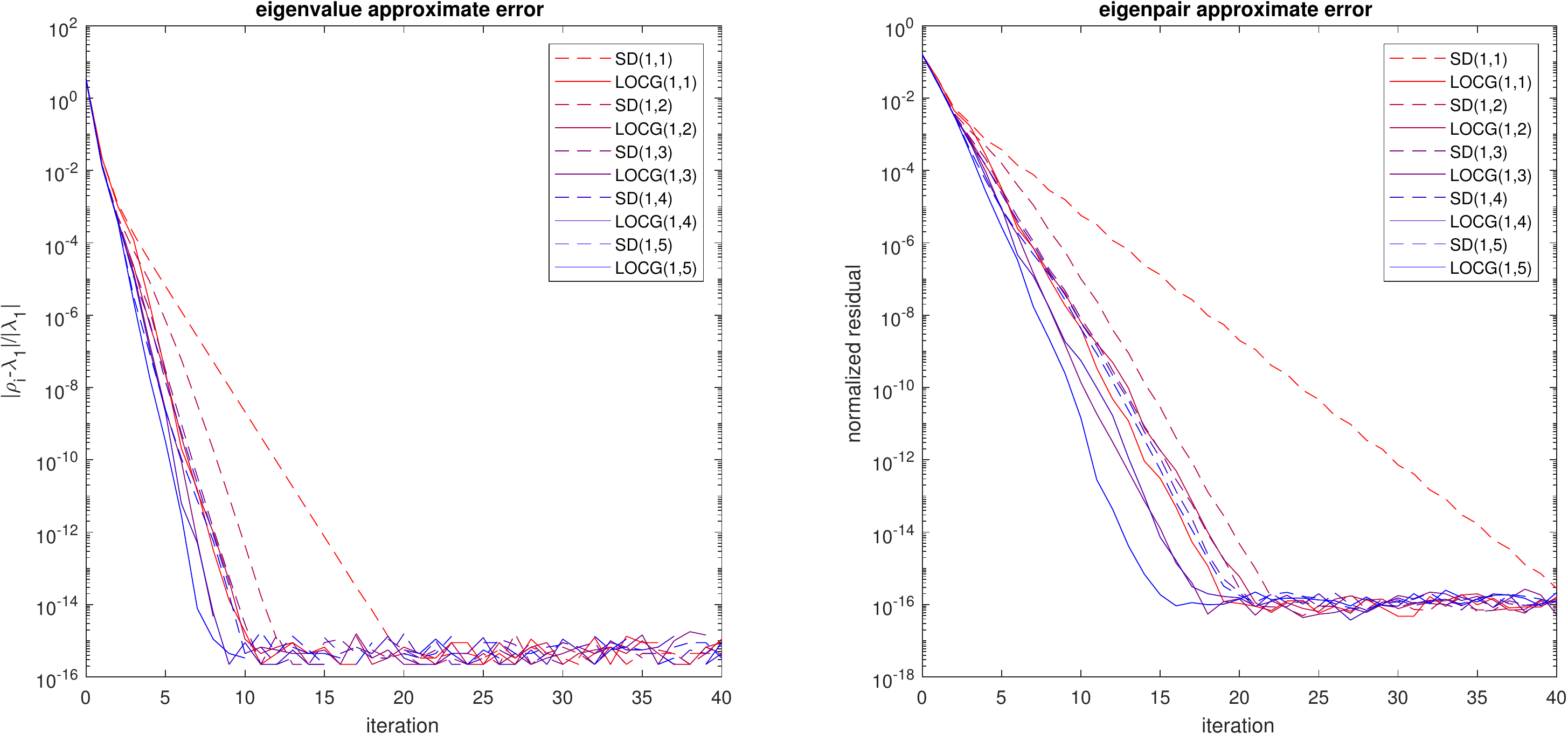}
		}\\
		\subfloat[$n=1000$]{
			\includegraphics[width=.8\textwidth]{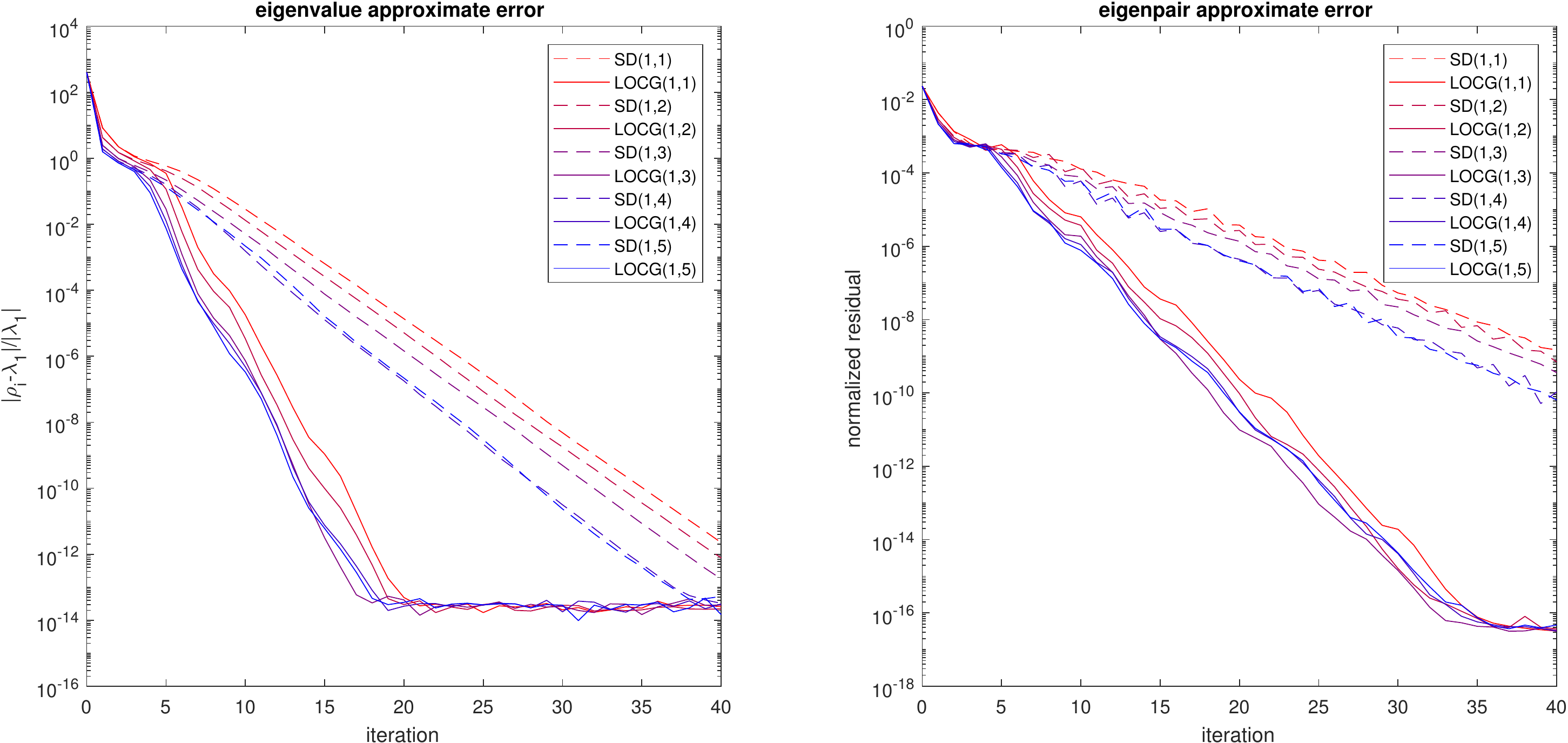}
		}
		\caption{Example~\ref{eg:gen_hyper2}: different dimenions $m$ of Krylov subspaces.}\label{fig:gen_hyper2}
	\end{figure}
	In Figure~\ref{fig:gen_hyper2}, the left figure shows the relative error of the approximations; the right figure shows the normalized residuals
	\[
		\frac{\|Q(\rho_i)x_i\|_2}{(\|A\|_1\rho_i^2+\|B\|_1|\rho_i|+\|C\|_1)\|x_i\|_2}
	\]
	of the approximations.

	In this example, we can see that
    \begin{itemize}
	\item $\LOCG$ is much better than $\SD$, especially for mid/large-scale problems;
	\item increasing the dimension of the Krylov subspace indeed accelerates the convergence to the eigenvalue, though not so significantly;
	\item increasing the dimension of the Krylov subspace perhaps slows down the convergence of the normalized residuals.
    \end{itemize}
	Thus, to balance the computational cost per step and the convergence, maybe the best choice is $\LOCG(1,1)$.
\end{example}

\section{Conclusions}
We have performed the convergence analysis of an extended LOBPCG algorithm for computing the extreme eigenvalue of Hermitian matrix polynomials, including two common instances --- definite matrix pairs and hyperbolic quadratic matrix polynomials.
This analysis was considered out of reach by Kressner et al~\cite[Subsection~3.2]{krps:14:indefinite} or by Liang and Li \cite[Subsection~11.2]{liangL2015hyperbolic} for the vector version of LOBPeCG.
However, it is quite natural to ask whether there exists any kind of convergence analysis for the block version of LOBPeCG.
It is likely that some analogues would hold, but this remains likely to be a difficult and complicated task for future work.

\appendix
\section{A lemma on the inertia property}
For any Hermitian matrix $A$, the inertia of $A$, denoted by $\inertia(A)$,
is a triple of integers which are the number of negative/zero/positive eigenvalues, respectively.

For any real $\lambda$, $F(\lambda)$ is a Hermitian matrix. So we can discuss its inertia, the result is Lemma~\ref{lm:inertia}.
Actually, the lemma is obvious for $\lambda B -A$ when $B\succ0$;
for a definite matrix pair or hyperbolic matrix polynomial $F(\lambda)$, it can be found in many works (see, e.g.~\cite[(0.7)]{vese:10}, \cite[Corollary~2.3.7]{aamm:11:thesis}, and \cite[Section~3]{kovo:13:sylvester}).
\begin{lemma}\label{lm:inertia}
	Given a Hermitian matrix polynomial $F(\lambda)$ satisfying the assumptions at the beginning of Section~\ref{sec:intro}. Then
	\begin{equation}\label{eq:lm:inertia}
		\inertia(F(\mu))= (n-i,0,i),\quad\text{for any}\; \mu\in(\lambda_i,\lambda_{i+1}),
	\end{equation}
	where $i$ is an index to make $\lambda_i<\lambda_{i+1}$.
\end{lemma}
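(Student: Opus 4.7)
The plan is to track $\inertia(F(\mu))$ as $\mu$ moves continuously through $\interval$, starting from $\inertia(F(\lambda_-))=(n,0,0)$ (which follows from $F(\lambda_-)\prec0$), and to show that the inertia changes by exactly $(-m,0,+m)$ each time $\mu$ crosses a distinct eigenvalue of $F$ of multiplicity $m$. Summing these jumps and comparing with the counting convention $\lambda_1\le\dots\le\lambda_\ell$ with multiplicity will yield \eqref{eq:lm:inertia}.

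First I would observe that $\det F(\mu)=0$ if and only if $\mu$ is an eigenvalue of $F$, so $F(\mu)$ is invertible on each component of $\interval\setminus\{\lambda_1,\dots,\lambda_\ell\}$. Since the eigenvalues of the Hermitian matrix $F(\mu)$ depend continuously on $\mu$, the numbers of positive and negative eigenvalues are locally constant on that open set, and the middle index of $\inertia(F(\mu))$ is $0$ there. This immediately gives the zero in the middle slot of \eqref{eq:lm:inertia}, and reduces the problem to bookkeeping of jumps at each $\lambda_i$.

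Next I would analyze the jump across a distinct eigenvalue $\lambda^\ast$ of $F$ of algebraic multiplicity $m$ in $\interval$. Let $V\in\mathbb{C}^{n\times m}$ be an orthonormal basis of $\ker F(\lambda^\ast)$. The key input from the hypotheses is that for every nonzero $x\in\range(V)$ we have $\rho(x)=\lambda^\ast\in\interval$, so $\sigma(x)=x^{\HH}F'(\lambda^\ast)x>0$; hence $V^{\HH}F'(\lambda^\ast)V\succ0$. Standard analytic perturbation theory (Rellich/Kato) applied to the polynomial, hence analytic, family $\mu\mapsto F(\mu)$ supplies $m$ analytic eigenvalue branches $\theta_1(\mu),\dots,\theta_m(\mu)$ defined near $\lambda^\ast$ with $\theta_j(\lambda^\ast)=0$; differentiating $F(\mu)v_j(\mu)=\theta_j(\mu)v_j(\mu)$ at $\mu=\lambda^\ast$ and taking inner products with $v_j(\lambda^\ast)$ shows that $\theta_j'(\lambda^\ast)$ equals an eigenvalue of $V^{\HH}F'(\lambda^\ast)V$, and is therefore strictly positive. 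Thus each $\theta_j$ crosses zero with positive slope, while the other $n-m$ eigenvalues of $F(\mu)$ stay bounded away from $0$ and keep their signs in a neighbourhood of $\lambda^\ast$. This produces exactly the asserted jump $(-m,0,+m)$.

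Finally, a straightforward induction starting from inertia $(n,0,0)$ just above $\lambda_-$ accumulates these jumps: after crossing all eigenvalues of $F$ that are $\le\lambda_i$ (counted with multiplicity), the inertia equals $(n-i,0,i)$ on the whole interval $(\lambda_i,\lambda_{i+1})$, which is \eqref{eq:lm:inertia}. I expect the main obstacle to be the perturbation step at a multiple eigenvalue: one must invoke a version of Rellich's theorem supplying $m$ analytic branches with the prescribed derivatives, and one must verify that the assumption $\sigma>0$ on $\domain$ is strong enough to force $V^{\HH}F'(\lambda^\ast)V\succ0$ (not merely $\succeq 0$)---this follows because $\sigma(x)>0$ is assumed for \emph{every} nonzero $x\in\domain$, and every nonzero $x\in\range(V)$ lies in $\domain$ with $\rho(x)=\lambda^\ast\in\interval$.
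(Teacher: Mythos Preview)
Your proposal is correct and follows essentially the same strategy as the paper: track $\inertia(F(\mu))$ through $\interval$ starting from $(n,0,0)$, use continuity of the eigenvalues of $F(\mu)$ to see the inertia is constant between consecutive eigenvalues, and use $\sigma(x)>0$ at eigenvectors to force the zero eigenvalues to cross with positive slope. The only difference is tooling at the crossing: the paper reduces ``without loss of generality'' to simple eigenvalues and checks the sign of $u_1^{\HH}F(\lambda_1+\varepsilon)u_1$ directly, whereas you treat multiplicity~$m$ in one stroke via Rellich's analytic perturbation theorem---your route is a bit heavier but more self-contained, since the paper's ``WLOG simple'' really needs a density/limiting argument to be airtight.
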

\begin{proof}
	First, for any $\mu\in(\lambda_i,\lambda_{i+1})$, $F(\lambda)$ is nonsingular.
	For $\mu_1$ and $\mu_2$ making $F(\mu_1)$ and $F(\mu)$ have different inertia,
	at least one positive (or negative) eigenvalue of $F(\mu_1)$ has to become a negative (or positive) eigenvalue of $F(\mu_2)$.
	Since the eigenvalues of a matrix, as functions of the matrix entries, are continuous,
	there exists $\mu_3$ between $\mu_1$ and $\mu_2$, such that $F(\mu_3)$ has at least one zero eigenvalue,
	or equivalently, there exists a nonzero vector $x$, such that $F(\mu_3)x=0$.
	This implies $\mu_3$ is an eigenvalue of $F(\lambda)$.
	Thus, for any interval in which no eigenvalue lies, the inertia is invariant.

	Without loss of generality, we assume the eigenvalues are simple. Since $F(\lambda_1-\varepsilon)\prec0$,
	by the continuity of eigenvalues of a matrix, $\inertia(F(\lambda_1))=(n-1,1,0)$.
	Write the corresponding eigenvector of $\lambda_1$ is $u_1$, and then $u_1^{\HH}F(\lambda_1+\varepsilon)u_1>0$.
	Then, also by the continuity, $\inertia(F(\lambda_1+\varepsilon))=(n-1,0,1)$.
	Similarly, we have \eqref{eq:lm:inertia} recursively.
\end{proof}

\section{Claims in the proof of Theorem~2.3}\label{sec:claim-proof}
Before proving the claims, we first establish two bound estimates, which will be used later.

One is that $x_i$ is bounded. Note that
\[
	F_i=F(\lambda_1)+\sum_{k=1}^m\frac{\varepsilon_i^k}{k!}F^{(k)}(\lambda_1), \quad
	\Phi_i=F'(\lambda_1)+\sum_{k=2}^m\frac{\varepsilon_i^{k-1}}{k!}F^{(k)}(\lambda_1).
\]
Since $\varepsilon_i$ is sufficiently small,
$Z_i^{\HH}F'(\rho_i)Z_i\succ0$ implies $Z_i^{\HH}\Phi_iZ_i\succ0,Z_i^{\HH}F'(\lambda_1)Z_i\succ0$.
Let $Q_i=Z_i(Z_i^{\HH}Z_i)^{-1/2}$ be the orthonormal basis of $\range(Z_i)$.
Then
$Q_i^{\HH}F'(\rho_i)Q_i\succ0, Q_i^{\HH}\Phi_iQ_i\succ0,Q_i^{\HH}F'(\lambda_1)Q_i\succ0$.
Write $x_i=Q_i\hat x_i$, and then
\[
	1=x_i^{\HH}\Phi_ix_i=\hat x_i^{\HH}Q_i^{\HH}\Phi_iQ_i\hat x_i\ge \lambda_{\min}(Q_i^{\HH}\Phi_iQ_i)\|\hat x_i\|^2,
\]
which implies
\[
	\|x_i\|^2\le\|\hat x_i\|^2
	\le\frac{1}{\lambda_{\min}(Q_i^{\HH}\Phi_iQ_i)}
	\le\frac{1}{\lambda_{\min}(Q_i^{\HH}F'_iQ_i)}+O(\delta_i)
	.
\]

The other is:
\begin{equation}\label{eq:thm:rate:compare:prf:tFt/tt}
	\text{	
		$-t_i^{\HH}F_it_i\sim t_i^{\HH}\Phi_it_i\sim t_i^{\HH}t_i$,
		for any $t_i=Q_i\hat t_i\ne0$ satisfying $t_i^{\HH}\Phi_ix_i=0$.
	}
\end{equation}
In fact, since $Q_i^{\HH}\Phi_iQ_i\succ0$, $t_i^{\HH}\Phi_it_i=\hat t^{\HH}Q_i^{\HH}\Phi_iQ_i\hat t\sim \hat t_i^{\HH}\hat t_i\sim t_i^{\HH}t_i$.
For the rest, since $x_i^{\HH}\Phi_ix_i=1,x_i^{\HH}F_ix_i=0$,
using the min-max principles \eqref{eq:trace:min} for the definite matrix pair $(-Q_i^{\HH}F_iQ_i, Q_i^{\HH}\Phi_iQ_i)$,
\begin{align*}
	-\frac{\hat t_i^{\HH}Q_i^{\HH}F_iQ_i\hat t_i }{\hat t_i^{\HH}Q_i^{\HH}\Phi_iQ_i\hat t_i}
	&= \frac{\hat t_i^{\HH}(-Q_i^{\HH}F_iQ_i)\hat t_i }{\hat t_i^{\HH}Q_i^{\HH}\Phi_iQ_i\hat t_i}+\frac{\hat x_i^{\HH}(-Q_i^{\HH}F_iQ_i)\hat x_i }{\hat x_i^{\HH}Q_i^{\HH}\Phi_iQ_i\hat x_i}
	\\&\ge \lambda_{\min}(-[Q_i^{\HH}\Phi_iQ_i]^{-1/2}Q_i^{\HH}F_iQ_i[Q_i^{\HH}\Phi_iQ_i]^{-1/2})+\lambda_{\min}^{(2)}(-[Q_i^{\HH}\Phi_iQ_i]^{-1/2}Q_i^{\HH}F_iQ_i[Q_i^{\HH}\Phi_iQ_i]^{-1/2})
	\\&= 0+\lambda_{\min}^{(2)}(-[Q_i^{\HH}F'(\lambda_1)Q_i]^{-1/2}Q_i^{\HH}F(\lambda_1)Q_i[Q_i^{\HH}F'(\lambda_1)Q_i]^{-1/2})+O(\varepsilon_i)
	.
\end{align*}
By \eqref{eq:courant-fischer:minmax},
\begin{align*}
	&\hspace*{-2cm}\lambda_{\min}^{(2)}(-[Q_i^{\HH}F'(\lambda_1)Q_i]^{-1/2}Q_i^{\HH}F(\lambda_1)Q_i[Q_i^{\HH}F'(\lambda_1)Q_i]^{-1/2})
	\\&=\min_{\dim \mathcal{U}=2}\max_{u\in \mathcal{U}}\frac{-u^{\HH}[Q_i^{\HH}F'(\lambda_1)Q_i]^{-1/2}Q_i^{\HH}F(\lambda_1)Q_i[Q_i^{\HH}F'(\lambda_1)Q_i]^{-1/2}u}{u^{\HH}u}
	\\&\qquad\qquad\text{ (write $v=[Q_i^{\HH}F'(\lambda_1)Q_i]^{-1/2}u$, and then $u=[Q_i^{\HH}F'(\lambda_1)Q_i]^{1/2}v$) }
	\\&=\min_{\dim \mathcal{V}=2}\max_{v\in \mathcal{V}}\frac{-v^{\HH}Q_i^{\HH}F(\lambda_1)Q_iv}{v^{\HH}Q_i^{\HH}Q_iv}\frac{v^{\HH}v}{v^{\HH}Q_i^{\HH}F'(\lambda_1)Q_iv}
	\qquad\text{(write $w=Q_i^{\HH}v$)}
	\\&=\min_{\dim \mathcal{W}=2}\max_{w\in \mathcal{W}}\frac{-w^{\HH}F(\lambda_1)w}{w^{\HH}w}\frac{v^{\HH}v}{v^{\HH}Q_i^{\HH}F'(\lambda_1)Q_iv}
	\\&\ge\min_{\dim \mathcal{V}=2}\max_{v\in \mathcal{V}}\frac{-v^{\HH}F(\lambda_1)v}{v^{\HH}v}\frac{1}{\lambda_{\max}(Q_i^{\HH}F'(\lambda_1)Q_i)}
	\\&=\frac{\lambda_{\min}^{(2)}(-F(\lambda_1))}{\lambda_{\max}(Q_i^{\HH}F'(\lambda_1)Q_i)}
	\\&\ge\frac{-\lambda_{\max}^{(2)}(F(\lambda_1))}{\lambda_{\max}(F'(\lambda_1))}
	=:\omega
	.
\end{align*}
Thus,
\begin{equation}\label{eq:thm:rate:compare:prf:omega}
	\frac{- t_i^{\HH}F_it_i }{t_i^{\HH}\Phi_it_i}
	=-\frac{\hat t_i^{\HH}Q_i^{\HH}F_iQ_i\hat t_i }{\hat t_i^{\HH}Q_i^{\HH}\Phi_iQ_i\hat t_i}
	\ge\omega+O(\varepsilon_i)>0.
\end{equation}
On the other hand, $-t_i^{\HH}F_it_i\le\|F_i\|t_i^{\HH}t_i\sim t_i^{\HH}\Phi_it_i$. In total, $-t_i^{\HH}F_it_i\sim t_i^{\HH}\Phi_it_i$.

Now we can begin to prove those claims.
\begin{proof}[Proof \emph{of \eqref{eq:thm:rate:compare:prf:claim:inv}}]
	Note that $\range(S_i)\subset\range(Z_i)$ and $S_i^{\HH}\Phi_ix_i=0$.
	By \eqref{eq:thm:rate:compare:prf:omega}, $-\hat t_i^{\HH}S_i^{\HH}F_iS_i\hat t_i\ge(\omega+O(\varepsilon_i))\hat t_i^{\HH}S_i^{\HH}\Phi_iS_i\hat t_i$.
	Hence
	\[
		\lambda_{\min}(-S_i^{\HH}F_iS_i)
		\ge(\omega+O(\varepsilon_i))\lambda_{\min}(S_i^{\HH}\Phi_iS_i)
		\ge(\omega+O(\varepsilon_i))\lambda_{\min}(Q_i^{\HH}\Phi_iQ_i)\lambda_{\min}(S_i^{\HH}S_i)
		>0.
	\]
	Note that $S_i^{\HH}F_{i+1}S_i=S_i^{\HH}F_iS_i-\delta_iS_i\Phi_iS_i$.
	It is clear that $
	\lambda_{\min}(-S_i^{\HH}F_{i+1}S_i)\ge \omega\lambda_{\min}(Q_i^{\HH}F'(\lambda_1)Q_i)+O(\varepsilon_i)>0$,
	which implies that $S_i^{\HH}F_{i+1}S_i$ is nonsingular.
	It is similar that $S_i^{\HH}F_{i+1}^{\eSD}S_i$ is nonsingular.
\end{proof}

\begin{proof}[Proof \emph{of \eqref{eq:thm:rate:compare:prf:claim:kappa4:rFr/rr}}]
	Since $(I-P_i)(I-T_i)r_i\in \range(Z_i)$ and $x_i^{\HH}\Phi_i(I-P_i)(I-T_i)r_i=0$,
	by \eqref{eq:thm:rate:compare:prf:tFt/tt},
	\[
		-r_i^{\HH}\check F_i(I-T_i)r_i=-r_i^{\HH}(I-T_i^{\HH})\check F_i(I-T_i)r_i\sim r_i^{\HH}r_i.
	\]
	For the rest,
	let $\rho_{i+1}^{\SD}$ be the minimal value of $\rho(x)$ in the subspace $\subspan\{x_i,r_i\}$,
	then
	\[
		\delta_i^{\SD} = -\frac{|r_i^{\HH}r_i|^2}{[x_i^{\HH}\Phi_i^{\SD}x_i][r_i^{\HH}\check F_i^{\SD}r_i]}
		\quad\Rightarrow\quad
		r_i^{\HH}r_i=-\delta_i^{\SD}[x_i^{\HH}\Phi_i^{\SD}x_i]\frac{r_i^{\HH}\check F_i^{\SD}r_i}{r_i^{\HH}r_i}=O(\delta_i).
		\qedhere
	\]
\end{proof}

\begin{proof}[Proof \emph{of \eqref{eq:thm:rate:compare:prf:claim:kappa23}}]
	Consider $\kappa_2$.
	\begin{align*}
		\kappa_2
		&= \frac{(s_i^{\eSD})^{\HH}\check F_is_i^{\eSD}}{(s_i^{\eSD})^{\HH}\check F_i^{\eSD}s_i^{\eSD}}
		= \frac{(s_i^{\eSD})^{\HH}(I-P_i^{\HH})F_{i+1}(I-P_i)s_i^{\eSD}}{(s_i^{\eSD})^{\HH}(I-(P_i^{\eSD})^{\HH})F_{i+1}^{\eSD}(I-P_i^{\eSD})s_i^{\eSD}}
		\\&= \frac{(s_i^{\eSD})^{\HH}(I-P_i^{\HH})F_i(I-P_i)s_i^{\eSD}-\delta_{i+1}(s_i^{\eSD})^{\HH}(I-P_i^{\HH})\Phi_i(I-P_i)s_i^{\eSD}}{(s_i^{\eSD})^{\HH}(I-(P_i^{\eSD})^{\HH})F_i(I-P_i^{\eSD})s_i^{\eSD}-\delta_{i+1}^{\eSD}(s_i^{\eSD})^{\HH}(I-(P_i^{\eSD})^{\HH})\Phi_i^{\eSD}(I-P_i^{\eSD})s_i^{\eSD}}
		.
	\end{align*}
	Since $(I-P_i^{\HH})s_i^{\eSD}\in\range(Z_i)$ and $(I-P_i^{\HH})\Phi_ix_i=0$, by \eqref{eq:thm:rate:compare:prf:tFt/tt},
	\[
		-(s_i^{\eSD})^{\HH}(I-P_i^{\HH})F_i(I-P_i)s_i^{\eSD} \sim (s_i^{\eSD})^{\HH}(I-P_i^{\HH})\Phi_i(I-P_i)s_i^{\eSD};
	\]
	since $(I-(P_i^{\eSD})^{\HH})s_i^{\eSD}\in\range(Z_i)$ and $(I-(P_i^{\eSD})^{\HH})\Phi_i^{\eSD}x_i=0$, then similarly to \eqref{eq:thm:rate:compare:prf:tFt/tt}, we have
	\[
		-(s_i^{\eSD})^{\HH}(I-(P_i^{\eSD})^{\HH})F_i(I-P_i^{\eSD})s_i^{\eSD} \sim (s_i^{\eSD})^{\HH}(I-(P_i^{\eSD})^{\HH})\Phi_i^{\eSD}(I-P_i^{\eSD})s_i^{\eSD}.
	\]
	Thus
	\[
		\kappa_2
		= \frac{[1+O(\delta_i)](s_i^{\eSD})^{\HH}(I-P_i^{\HH})F_i(I-P_i)s_i^{\eSD}}{[1+O(\delta_i^{\eSD})](s_i^{\eSD})^{\HH}(I-(P_i^{\eSD})^{\HH})F_i(I-P_i^{\eSD})s_i^{\eSD}}.
	\]
	Note that
	\begin{align*}
		0\ge (s_i^{\eSD})^{\HH}(I-P_i^{\HH})F_i(I-P_i)s_i^{\eSD}
		&=(s_i^{\eSD})^{\HH}F_is_i^{\eSD}-2\Re (s_i^{\eSD})^{\HH}\Phi_ix_ix_i^{\HH}F_is_i^{\eSD}+(s_i^{\eSD})^{\HH}\Phi_ix_ix_i^{\HH}F_ix_ix_i^{\HH}\Phi s_i^{\eSD}
		\\&=(s_i^{\eSD})^{\HH}F_is_i^{\eSD}-2r_i^{\HH}r_i\Re (s_i^{\eSD})^{\HH}\Phi_ix_i
		,
	\end{align*}
	and a similar expansion of $(s_i^{\eSD})^{\HH}(I-(P_i^{\eSD})^{\HH})F_i(I-P_i^{\eSD})s_i^{\eSD}$ holds.
	Then
	\begin{align*}
		\kappa_2
		&= [1+O(\delta_i)]\frac{(s_i^{\eSD})^{\HH}F_is_i^{\eSD}-2r_i^{\HH}r_i\Re (s_i^{\eSD})^{\HH}\Phi_ix_i}{(s_i^{\eSD})^{\HH}F_is_i^{\eSD}-2r_i^{\HH}r_i\Re (s_i^{\eSD})^{\HH}\Phi_i^{\eSD}x_i}
		\\&= [1+O(\delta_i)]\left[1+\frac{2r_i^{\HH}r_i\Re (s_i^{\eSD})^{\HH}[\Phi_i^{\eSD}-\Phi_i]x_i}{(s_i^{\eSD})^{\HH}F_is_i^{\eSD}-2r_i^{\HH}r_i\Re (s_i^{\eSD})^{\HH}\Phi_i^{\eSD}x_i} \right]
		\\&= [1+O(\delta_i)]\left[1+\frac{2r_i^{\HH}r_i\Re (s_i^{\eSD})^{\HH}[\Phi_i^{\eSD}-\Phi_i]x_i}{[1+O(\delta_i^{\eSD})](s_i^{\eSD})^{\HH}\check F_i^{\eSD}s_i^{\eSD}} \right]
		.
	\end{align*}
	It is easy to see that
	\[
		(s_i^{\eSD})^{\HH}\check F_i^{\eSD}s_i^{\eSD}
		=r_i^{\HH}(I-(T^{\eSD})^{\HH})\check F_i^{\eSD}(I-T^{\eSD})r_i
		=r_i^{\HH}\check F_i^{\eSD}r_i-r_i^{\HH}\check F_i^{\eSD}S_i(S_i^{\HH}\check F_i^{\eSD}S_i)^{-1}S_i^{\HH}\check F_i^{\eSD}r_i
		.
	\]
	Similarly to the proof of \eqref{eq:thm:rate:compare:prf:claim:inv}, we know
	$-\begin{bmatrix}
		S_i & r_i
		\end{bmatrix}^{\HH}\check F_i^{\eSD}\begin{bmatrix}
		S_i & r_i
		\end{bmatrix}=-\begin{bmatrix}
		S_i & (I-P_i^{\eSD})r_i
		\end{bmatrix}^{\HH}F_{i+1}^{\eSD}\begin{bmatrix}
		S_i & (I-P_i^{\eSD})r_i
	\end{bmatrix}$ is positive definite.
	Thus, since $r_i^{\HH}S_i=0$, by a matrix version of the Wielandt inequality (see Wang and Ip \cite[Theorem~1]{wangI1999matrix}),
	\[
		-r_i^{\HH}\check F_i^{\eSD}S_i(S_i^{\HH}\check F_i^{\eSD}S_i)^{-1}S_i^{\HH}\check F_i^{\eSD}r_i\le -[\chi+O(\varepsilon_i)] r_i^{\HH}\check F_i^{\eSD} r_i,
		\qquad \chi=\left(\frac{\lambda_{\max}(-F(\lambda_1))-\lambda_{\min}^{(2)}(-F(\lambda_1))}{\lambda_{\max}(-F(\lambda_1))+\lambda_{\min}^{(2)}(-F(\lambda_1))}\right)^2.
	\]
	which gives $-(s_i^{\eSD})^{\HH}\check F_i^{\eSD}s_i^{\eSD}\sim -r_i^{\HH}\check F_i^{\eSD}r_i.$
	Note that by \eqref{eq:thm:rate:compare:prf:tFt/tt}, $-r_i^{\HH}\check F_i^{\eSD} r_i\sim r_i^{\HH}r_i, -(s_i^{\eSD})^{\HH}\check F_i^{\eSD}s_i^{\eSD}\sim (s_i^{\eSD})^{\HH}s_i^{\eSD}$.
	Thus,
	\begin{equation}\label{eq:thm:rate:compare:prf:sFs}
		-r_i^{\HH}\check F_i^{\eSD} r_i\sim r_i^{\HH}r_i\sim (s_i^{\eSD})^{\HH}s_i\sim -(s_i^{\eSD})^{\HH}\check F_i^{\eSD}s_i^{\eSD},
	\end{equation}
	and
	\[
		\kappa_2 = [1+O(\delta_i)]\left(1+O(1)\Re (s_i^{\eSD})^{\HH}[\Phi_i^{\eSD}-\Phi_i]x_i \right)
		.
	\]
	Noticing that
	\[
		\left|(s_i^{\eSD})^{\HH}[\Phi_i^{\eSD}-\Phi_i]x_i\right|\le\|s_i\|(\delta_i-\delta_i^{\eSD})\left[\|F''(\rho_i)\|+O(\delta_i)\right]\|x_i\|=O(\delta_i^{3/2}),
	\]
	we have
	\[
		\kappa_2 = [1+O(\delta_i)]\left(1+O(\delta_i^{3/2})\right)=1+O(\delta_i).
	\]

	Consider $\kappa_3$.
	By the Sherman-Morrison-Woodbury formula, letting $D_i=\check F_i^{\eSD}-\check F_i$,
	\begin{align*}
		e_i
		&= S_i(S_i^{\HH}\check F_i^{\eSD}S_i)^{-1}S_i^{\HH}\check F_i^{\eSD}r_i - S_i(S_i^{\HH}\check F_iS_i)^{-1}S_i^{\HH}\check F_ir_i ,
		\\&= S_i\left[(S_i^{\HH}\check F_i^{\eSD}S_i)^{-1}S_i^{\HH}\check F_i^{\eSD} - (S_i^{\HH}\check F_iS_i)^{-1}S_i^{\HH}\check F_i\right]r_i
		\\&= S_i\left[ \left((S_i^{\HH}\check F_iS_i)^{-1}-(S_i^{\HH}\check F_iS_i)^{-1}S_i^{\HH}D_iS_i(S_i^{\HH}\check F_i^{\eSD}S_i)^{-1}\right)S_i^{\HH}\check F_i^{\eSD} - (S_i^{\HH}\check F_iS_i)^{-1}S_i^{\HH}\check F_i\right]r_i
		\\&= S_i\left[ (S_i^{\HH}\check F_iS_i)^{-1}S_i^{\HH}D_i-(S_i^{\HH}\check F_iS_i)^{-1}S_i^{\HH}D_iS_i(S_i^{\HH}\check F_i^{\eSD}S_i)^{-1}S_i^{\HH}\check F_i^{\eSD}\right]r_i
		\\&= S_i(S_i^{\HH}\check F_iS_i)^{-1}S_i^{\HH}D_i\left[I-S_i(S_i^{\HH}\check F_i^{\eSD}S_i)^{-1}S_i^{\HH}\check F_i^{\eSD}\right]r_i
		\\&= S_i(S_i^{\HH}\check F_iS_i)^{-1}S_i^{\HH}D_is_i^{\eSD}.
	\end{align*}
	Since
	$
	S_i(S_i^{\HH}\check F_iS_i)^{-1}S_i^{\HH}\check F_i(s_i^{\eSD}+e_i)=T_i(I-T_i)r_i=0,
	$
	we have $e_i^{\HH}\check F_i(s_i^{\eSD}+e_i)=0$.
	Thus,
	\[
		\kappa_3
		= \frac{(s_i^{\eSD}+e_i)^{\HH}\check F_i(s_i^{\eSD}+e_i)}{(s_i^{\eSD})^{\HH}\check F_is_i^{\eSD}}
		= 1-\frac{e_i^{\HH}\check F_ie_i}{(s_i^{\eSD})^{\HH}\check F_is_i^{\eSD}}
		= 1-\frac{(s_i^{\eSD})^{\HH}D_iS_i(S_i^{\HH}\check F_iS_i)^{-1}S_i^{\HH}D_is_i^{\eSD}}{(s_i^{\eSD})^{\HH}\check F_is_i^{\eSD}}
		.
	\]
	First we estimate $S_i(S_i^{\HH}\check F_iS_i)^{-1}S_i^{\HH}$.
	Let $S_i=Q_SR_S$ be its QR factorization, and then
	\[
		S_i(S_i^{\HH}\check F_iS_i)^{-1}S_i^{\HH}
		= Q_S(Q_S^{\HH}\check F_iQ_S)^{-1}Q_S^{\HH}
		= Q_S\left(Q_S^{\HH}(I-P_i^{\HH})F_{i+1}(I-P_i)Q_S\right)^{-1}Q_S^{\HH}
		.
	\]
	Since $\range((I-P_i)Q_S)\subset \range(Z_i)$, similarly to the proof of \eqref{eq:thm:rate:compare:prf:claim:inv},
	we have
	\begin{equation}\label{eq:thm:rate:compare:prf:T}
		\| S_i(S_i^{\HH}\check F_iS_i)^{-1}S_i^{\HH}\|\le \frac{1}{\omega\lambda_{\min}(Q_S^{\HH}F'(\lambda_1)Q_S)+O(\varepsilon_i)} .
	\end{equation}
	Then turn to $D_i$.
	Noticing $(I-P_i)Q_S=Q_S$,
	\begin{align*}
		Q_S^{\HH}D_i
		&=Q_S^{\HH}\left[ (I-(P_i^{\eSD})^{\HH})F_{i+1}^{\eSD}(I-P_i^{\eSD})-(I-P_i^{\HH})F_{i+1}(I-P_i) \right]
		\\&=Q_S^{\HH}\left[(P_i^{\HH}-(P_i^{\eSD})^{\HH})F_{i+1}^{\eSD}(I-P_i^{\eSD})+ F_{i+1}^{\eSD}(I-P_i^{\eSD})-F_{i+1}(I-P_i) \right]
		\\&=Q_S^{\HH}\left[ (P_i^{\HH}-P_i^{\eSD})^{\HH}F_{i+1}^{\eSD}(I-P_i^{\eSD})+F_{i+1}^{\eSD}(P_i-P_i^{\eSD})+(F_{i+1}^{\eSD}-F_{i+1})(I-P_i) \right]
		\\&=Q_S^{\HH}\left[ (\Phi_i-\Phi_i^{\eSD})x_ix_i^{\HH}F_{i+1}^{\eSD}(I-P_i^{\eSD})+F_{i+1}^{\eSD}x_ix_i^{\HH}(\Phi_i-\Phi_i^{\eSD})+(F_{i+1}^{\eSD}-F_{i+1})(I-P_i) \right]
	\end{align*}
	and then
	\[
		\|Q_S^{\HH}D_i\|
		\le(\delta_i-\delta_i^{\eSD})\left[ \left(\|F''(\rho_i)\|+O(\delta_i)\right)\|x_i\|^2\|F_{i+1}^{\eSD}\|\left( \|I-P_i^{\eSD}\| +1\right)+\left(\|F'(\rho_i)+O(\delta_i)\right)\|I-P_i\| \right]
		=O(\delta_i)
		.
	\]
	Thus, to sum up, together with \eqref{eq:thm:rate:compare:prf:sFs},
	\begin{align*}
		\kappa_3
		&= 1-\frac{(s_i^{\eSD})^{\HH}D_iQ_SQ_S^{\HH}S_i(S_i^{\HH}\check F_iS_i)^{-1}S_i^{\HH}Q_S Q_S^{\HH}D_is_i^{\eSD}}{(s_i^{\eSD})^{\HH}\check F_is_i^{\eSD}}
		\\&= 1+\frac{O(1)\|Q_S^{\HH}D_i\|^2\|s_i^{\eSD}\|^2}{\|s_i^{\eSD}\|^2}
		= 1-O(\delta_i^2)
		.
		\qedhere
	\end{align*}
\end{proof}

\begin{proof}[Proof \emph{of \eqref{eq:thm:rate:compare:prf:claim:kappa4:dd}}]
	Since $\check F_ix_{i-1}=\check F_i(x_i-d_{i-1})=-\check F_id_{i-1}$
	and 
	$\check F_i(I-T_i)=(I-T_i^{\HH})\check F_i$,
	\[
		\tau_i=x_{i-1}^{\HH}\check F_i(I-T_i)x_{i-1}=d_{i-1}^{\HH}\check F_i(I-T_i)d_{i-1}.
	\]
	First
	\begin{equation} \label{eq:thm:rate:compare:prf:F}
		\begin{aligned}[b]
			\check F_i
			&= (I-P_i^{\HH})F_{i+1}(I-P_i)
			\\&= P_i^{\HH}F_{i+1}P_i -P_i^{\HH}F_{i+1}-F_{i+1}P_i+F_{i+1}- F_i + F_i
			\\&= F_{i+1}- F_i+\Phi_ix_ix_i^{\HH}F_{i+1}x_ix_i^{\HH}\Phi_i -\Phi_ix_ix_i^{\HH}F_{i+1}-F_{i+1}x_ix_i^{\HH}\Phi_i + F_i
			\\&= -\delta_i\Phi_i-\delta_i\Phi_ix_ix_i^{\HH}\Phi_i -\Phi_ix_ix_i^{\HH}(F_i-\delta_i\Phi_i)-(F_i-\delta_i\Phi_i)x_ix_i^{\HH}\Phi_i + F_i
			\\&=  F_i - \Phi_ix_ir_i^{\HH} - r_ix_i^{\HH}\Phi_i    -\delta_i\Phi_i[I-x_ix_i^{\HH}\Phi_i]	
			.
		\end{aligned}
	\end{equation}
	Since $r_i^{\HH}d_{i-1}=0$ by \eqref{eq:thm:linesearch:r_opt-perp}, 
	\begin{align*}
		d_{i-1}^{\HH}\check F_id_{i-1}
		&=  d_{i-1}^{\HH}F_id_{i-1} - d_{i-1}^{\HH}\Phi_ix_ir_i^{\HH}d_{i-1} - d_{i-1}^{\HH}r_ix_i^{\HH}\Phi_id_{i-1} -\delta_id_{i-1}^{\HH}\Phi_i(I-P_i)d_{i-1}
		\\&=  d_{i-1}^{\HH}F_id_{i-1}-\delta_id_{i-1}^{\HH}\Phi_i(I-P_i)d_{i-1}
		\\&= d_{i-1}^{\HH}F_id_{i-1}+O(\delta_i)\|d_{i-1}\|^2
		.
	\end{align*}
	Then, noticing that
	$x_{i-1}^{\HH}\Phi_{i-1}d_{i-1}=x_{i-1}^{\HH}\Phi_{i-1}(I-P_{i-1})d_{i-1}=0$,
	by \eqref{eq:thm:linesearch:rho}, 
	\begin{align*}
		d_{i-1}^{\HH}F_id_{i-1}
		= -\delta_{i-1} x_{i-1}^{\HH}\Phi_{i-1}x_{i-1}
		&= -\delta_{i-1} (x_i-d_{i-1})^{\HH}\Phi_{i-1}(x_i-d_{i-1})
		\\&= -\delta_{i-1} (x_i^{\HH}\Phi_{i-1}x_i-d_{i-1}^{\HH}\Phi_{i-1}d_{i-1})
		\\&= -\delta_{i-1} (x_i^{\HH}\Phi_ix_i-d_{i-1}^{\HH}\Phi_{i-1}d_{i-1}+O(\delta_{i-1}))
		\\&= -\delta_{i-1} (1-d_{i-1}^{\HH}\Phi_{i-1}d_{i-1}+O(\delta_{i-1}))
		.
	\end{align*}
	Similarly to \eqref{eq:thm:rate:compare:prf:tFt/tt},
	$-d_{i-1}^{\HH}F_id_{i-1}\sim d_{i-1}^{\HH}\Phi_{i-1}d_{i-1}$,
	which implies
	\[
		d_{i-1}^{\HH}F_id_{i-1}
		= -\frac{\delta_{i-1}}{1+O(\delta_{i-1})}+O(\delta_{i-1}^2)
		= -\delta_{i-1}+O(\delta_{i-1}^2)
		,
	\]
	and $\delta_{i-1}\sim d_{i-1}^{\HH}\Phi_{i-1}d_{i-1}\sim d_{i-1}^{\HH}d_{i-1}$.
	Thus
	\begin{equation} \label{eq:thm:rate:compare:prf:dFd}
		d_{i-1}^{\HH}\check F_id_{i-1}= -\delta_{i-1}[1+O(\delta_{i-1})+O(\delta_i)].
	\end{equation}
	Then, by \eqref{eq:thm:rate:compare:prf:F},
	\begin{align*}
		d_{i-1}^{\HH}\check F_iT_id_{i-1}
		&=  d_{i-1}^{\HH}F_iT_id_{i-1} - d_{i-1}^{\HH}\Phi_ix_ir_i^{\HH}T_id_{i-1} - d_{i-1}^{\HH}r_ix_i^{\HH}\Phi_iT_id_{i-1}  -\delta_id_{i-1}^{\HH}\Phi_i[I-x_ix_i^{\HH}\Phi_i]T_id_{i-1}
		\\&=   d_{i-1}^{\HH}F_iT_id_{i-1}-\delta_id_{i-1}^{\HH}\Phi_i[I-x_ix_i^{\HH}\Phi_i]T_id_{i-1}.
	\end{align*}
	Since $\|T_i\|\le\|S_i(S_i^{\HH}\check F_iS_i)^{-1}S_i^{\HH}\|\|F_i\|\|I-P_i\|^2$, by \eqref{eq:thm:rate:compare:prf:T},
	\[
		d_{i-1}^{\HH}\check F_iT_id_{i-1}
		=  d_{i-1}^{\HH}F_iT_id_{i-1}+O(\delta_i\delta_{i-1})
		.
	\]
	Then, also using \eqref{eq:thm:rate:compare:prf:T},
	\begin{align*}
		d_{i-1}^{\HH}F_iT_id_{i-1}
		&= [x_i-x_{i-1}]^{\HH}F_iS_i(S_i^{\HH}\check F_iS_i)^{-1}S_i^{\HH}F_{i+1}(I-P_i)[x_i-x_{i-1}]
		\\&= [x_i-x_{i-1}]^{\HH}F_iS_i(S_i^{\HH}\check F_iS_i)^{-1}S_i^{\HH}F_{i+1}[x_i(x_i^{\HH}\Phi_ix_{i-1})-x_{i-1}]
		\\&= [r_i-(F_{i-1}-\delta_{i-1}\Phi_{i-1})x_{i-1}]^{\HH}S_i(S_i^{\HH}\check F_iS_i)^{-1}S_i^{\HH}
		[(F_i-\delta_i\Phi_i)x_i(x_i^{\HH}\Phi_ix_{i-1})-(F_{i-1}-\delta_{i-1}\Phi_{i-1}-\delta_i\Phi_i)x_{i-1}]
		\\&= [r_i^{\HH}-r_{i-1}^{\HH}+\delta_{i-1}x_{i-1}^{\HH}\Phi_{i-1}]S_i(S_i^{\HH}\check F_iS_i)^{-1}S_i^{\HH}
		[(r_i-\delta_i\Phi_ix_i)(1+O(\delta_{i-1}^{1/2}))-r_{i-1}+\delta_{i-1}\Phi_{i-1}x_{i-1}+\delta_i\Phi_ix_{i-1}]
		\\&= [-r_{i-1}^{\HH}+\delta_{i-1}x_{i-1}^{\HH}\Phi_{i-1}]S_i(S_i^{\HH}\check F_iS_i)^{-1}S_i^{\HH}
		[-r_{i-1}+\delta_{i-1}\Phi_{i-1}x_{i-1}-\delta_i\Phi_i(d_{i-1}+x_iO(\delta_{i-1}^{1/2}))]
		\\&=r_{i-1}^{\HH}S_i(S_i^{\HH}\check F_iS_i)^{-1}S_i^{\HH}r_{i-1}+O(\delta_{i-1}^{3/2})
		.
	\end{align*}
	Since $r_i^{\HH}F_{i-1}^jr_{i-1}=0$ for $j=1,\dots,m_e$ by \eqref{eq:thm:linesearch:r_opt-perp} and then
	\begin{align*}
		r_{i-1}^{\HH}F_i^jr_i
		&= r_{i-1}^{\HH}[F_{i-1}^j-\delta_{i-1}F_{i-1}^{j-1}\Phi_{i-1}-\delta_{i-1}\Phi_{i-1}F_{i-1}^{j-1}+\delta_{i-1}^2F_{i-1}^{j-2}\Phi_{i-1}^2+\dotsb]r_i
		\\&= r_{i-1}^{\HH}F_{i-1}^jr_i+O(\delta_{i-1})\|r_{i-1}\|\|r_i\|
		= O(\delta_{i-1}^{3/2})\|r_i\|
		,
	\end{align*}
	together with
	\[
		r_{i-1}^{\HH}x_i=r_{i-1}^{\HH}(x_{i-1}+d_{i-1})=r_{i-1}^{\HH}d_{i-1}=O(\delta_{i-1}),
	\]
	we have
	\begin{equation}\label{eq:thm:rate:compare:prf:rS}
		\|r_{i-1}^{\HH}S_i\|
		= \|r_{i-1}^{\HH}(I-x_ix_i^{\HH}\Phi_i-r_ir_i^{\HH}(r_i^{\HH}r_i)^{-1})\begin{bmatrix}
			F_ir_i & \dots & F_i^{m_e}r_i
		\end{bmatrix} \|
		= O(\delta_{i-1})\|r_i\|
		.
	\end{equation}
	Similarly to the proof of \eqref{eq:thm:rate:compare:prf:claim:inv},
	\begin{equation}\label{eq:thm:rate:compare:prf:SFS}
		\|(S_i^{\HH}\check F_iS_i)^{-1}\|=O(\|r_i\|^{-2}) .
	\end{equation}
	Thus, $r_{i-1}^{\HH}S_i(S_i^{\HH}\check F_iS_i)^{-1}S_i^{\HH}r_{i-1}=O(\delta_{i-1}^2)$ and $d_{i-1}^{\HH}F_iT_id_{i-1} =O(\delta_{i-1}^{3/2})$.
	Thus,
	\begin{equation} \label{eq:thm:rate:compare:prf:dFTd}
		d_{i-1}^{\HH}\check F_iT_id_{i-1}=\delta_{i-1}[O(\delta_{i-1}^{1/2})+O(\delta_i)].
	\end{equation}
	Then \eqref{eq:thm:rate:compare:prf:dFd} and \eqref{eq:thm:rate:compare:prf:dFTd}
	give \eqref{eq:thm:rate:compare:prf:claim:kappa4:dd}.
\end{proof}

\begin{proof}[Proof \emph{of \eqref{eq:thm:rate:compare:prf:claim:kappa4:dr}}]
	Since $\check F_ix_{i-1}=\check F_i(x_i-d_{i-1})=-\check F_id_{i-1}$
	and 
	$\check F_i(I-T_i)=(I-T_i^{\HH})\check F_i$,
	\[
		\beta_i=x_{i-1}^{\HH}\check F_i(I-T_i)r_i=d_{i-1}^{\HH}\check F_i(I-T_i)r_i.
	\]
	By \eqref{eq:thm:rate:compare:prf:F},
	\begin{align*}
		d_{i-1}^{\HH}\check F_i(I-T_i)r_i
		&=  d_{i-1}^{\HH}F_i(I-T_i)r_i - d_{i-1}^{\HH}\Phi_ix_ir_i^{\HH}(I-T_i)r_i - d_{i-1}^{\HH}r_ix_i^{\HH}\Phi_i(I-T_i)r_i -\delta_id_{i-1}^{\HH}\Phi_i(I-P_i)(I-T_i)r_i
		\\&=  d_{i-1}^{\HH}F_i(I-T_i)r_i - d_{i-1}^{\HH}\Phi_ix_ir_i^{\HH}r_i - \delta_id_{i-1}^{\HH}\Phi_i(I-P_i)(I-T_i)r_i
		\\&=   d_{i-1}^{\HH}\check F_{i-1}(I-T_i)r_i+d_{i-1}^{\HH}F_iP_{i-1}(I-T_i)r_i- d_{i-1}^{\HH}\Phi_ix_ir_i^{\HH}r_i+O(\delta_i\delta_{i-1}^{1/2})\|r_i\|
		.
	\end{align*}
	By \eqref{eq:thm:linesearch:r}, $d_{i-1}^{\HH}\check F_{i-1}(I-T_i)r_i=(r_i-r_{i-1})^{\HH}(I-T_i)r_i=r_i^{\HH}r_i+r_{i-1}^{\HH}T_ir_i$.
	Note that
	\begin{align*}
		d_{i-1}^{\HH}F_iP_{i-1}(I-T_i)r_i
		&=d_{i-1}^{\HH}F_ix_{i-1}\frac{x_{i-1}^{\HH}\Phi_{i-1}(I-T_i)r_i}{x_{i-1}^{\HH}\Phi_{i-1}x_{i-1}}
		\\&=(x_i-x_{i-1})^{\HH}F_ix_{i-1}\frac{x_{i-1}^{\HH}\Phi_{i-1}(I-T_i)r_i}{x_{i-1}^{\HH}\Phi_{i-1}x_{i-1}}
		\\&=-x_{i-1}^{\HH}F_ix_{i-1}\frac{x_{i-1}^{\HH}\Phi_{i-1}(I-T_i)r_i}{x_{i-1}^{\HH}\Phi_{i-1}x_{i-1}}
		\\&=-x_{i-1}^{\HH}(F_{i-1}-\delta_{i-1}\Phi_{i-1})x_{i-1}\frac{x_{i-1}^{\HH}\Phi_{i-1}(I-T_i)r_i}{x_{i-1}^{\HH}\Phi_{i-1}x_{i-1}}
		\\&= \delta_{i-1}x_{i-1}^{\HH}\Phi_{i-1}(I-T_i)r_i
		\\&= O(\delta_{i-1})\|r_i\|
		.
	\end{align*}
	Thus,
	\[
		\quad d_{i-1}^{\HH}\check F_i(I-T_i)r_i
		= r_{i-1}^{\HH}T_ir_i +(1-d_{i-1}^{\HH}\Phi_ix_i)r_i^{\HH}r_i +[O(\delta_{i-1})+O(\delta_i\delta_{i-1}^{1/2})]\|r_i\|
		.
	\]
	Note that $\delta_{i-1}\sim d_{i-1}^{\HH}\Phi_id_{i-1}\sim d_{i-1}^{\HH}d_{i-1}$
	and then $\|x_{i-1}\|=\|x_i-d_{i-1}\|\le\|x_i\|+O(\delta_{i-1})$ which means $x_{i-1}$ is bounded.
	Also, note that $ r_i^{\HH}r_i=O(\delta_i)$ and $x_{i-1}^{\HH}\Phi_{i-1}d_{i-1}=0$.
	Thus,
	\begin{align*}
		x_i^{\HH}\Phi_id_{i-1}
		&= d_{i-1}^{\HH}\Phi_id_{i-1} +x_{i-1}^{\HH}\Phi_id_{i-1}
		\\&= O(\delta_{i-1})+x_{i-1}^{\HH}\Phi_{i-1}d_{i-1}+(\delta_{i-1}-\delta_i)x_{i-1}^{\HH}F''(\lambda_1)d_{i-1}
		\\&= O(\delta_{i-1})+O(\delta_i\delta_{i-1}^{1/2})
		.
	\end{align*}
	By \eqref{eq:thm:rate:compare:prf:rS} and \eqref{eq:thm:rate:compare:prf:SFS},
	noticing that
	$\|S_i\|
	\le \left\|\begin{bmatrix}
		F_ir_i & \dots & F_i^{m_e}r_i
	\end{bmatrix} \right\|
	= O(1)\|r_i\|
	$,
	we have
	\begin{align*}
		|r_{i-1}^{\HH}T_ir_i|
		&= |r_{i-1}^{\HH}S_i(S_i^{\HH}F_{i+1}S_i)^{-1}S_i^{\HH}F_{i+1}(I-P_i)r_i|
		\\&\le \|r_{i-1}^{\HH}S_i\|\|(S_i^{\HH}F_{i+1}S_i)^{-1}\|\|S_i^{\HH}F_{i+1}(I-P_i)r_i\|
		= O(\delta_{i-1})\|r_i\|.
	\end{align*}
	Then, to sum up,
	we have \eqref{eq:thm:rate:compare:prf:claim:kappa4:dr}.
\end{proof}

\smallskip
\subparagraph{Acknowledgement.}
The authors thank Prof. Ren-Cang Li for his MATLAB code and helpful discussions.

{\small
	\bibliographystyle{plain}
	\bibliography{cgconv}

\begin{thebibliography}{10}

\bibitem{aamm:11:thesis}
Maha {Al-Ammari}.
\newblock {\em Analysis of structured polynomial eigenvalue problems}.
\newblock Ph.d. thesis, University of Manchester, Manchester, UK, 2011.

\bibitem{arkn:15:convergence}
Merico~E. Argentati, Andrew~V. Knyazev, Klaus Neymeyr, Evgueni~E. Ovtchinnikov,
  and Ming Zhou.
\newblock Convergence theory for preconditioned eigenvalue solvers in a
  nutshell.
\newblock {\em Foundations of Computational Mathematics}, pages 1--15, 2015.

\bibitem{behm:11}
Timo Betcke, Nicholas~J. Higham, Volker Mehrmann, Christian Schr\"oder, and
  Fran\c{c}oise Tisseur.
\newblock {NLEVP}: A collection of nonlinear eigenvalue problems.
\newblock MIMS Eprint 2011.116, Manchester Institute for Mathematical Sciences,
  The University of Manchester, Manchester, UK, 2011.

\bibitem{duff:55}
R.~Duffin.
\newblock A minimax theory for overdamped networks.
\newblock {\em Indiana Univ. Math. J.}, 4:221--233, 1955.

\bibitem{goye:02}
Gene~H. Golub and Qiang Ye.
\newblock An inverse free preconditioned {K}rylov subspace method for symmetric
  generalized eigenvalue problems.
\newblock {\em SIAM J. Sci. Comput.}, 24(1):312--334, 2002.

\bibitem{gula:05}
C.-H. Guo and P.~Lancaster.
\newblock Algorithms for hyperbolic quadratic eigenvalue problems.
\newblock {\em Math. Comp.}, 74:1777--1791, 2005.

\bibitem{knya:01}
Andrew~V. Knyazev.
\newblock Toward the optimal preconditioned eigensolver: Locally optimal block
  preconditioned conjugate gradient method.
\newblock {\em SIAM J. Sci. Comput.}, 23(2):517--541, 2001.

\bibitem{knne:03}
Andrew~V. Knyazev and Klaus Neymeyr.
\newblock A geometric theory for preconditioned inverse iteration {III}: A
  short and sharp convergence estimate for generalized eigenvalue problems.
\newblock {\em Linear Algebra Appl.}, 358(1-3):95--114, 2003.

\bibitem{kovo:13:sylvester}
Aleksandra Kosti{\'c} and Heinrich Voss.
\newblock On {S}ylvester's law of inertia for nonlinear eigenvalue problems.
\newblock {\em Electron. Trans. Numer. Anal.}, 40:82--93, 2013.

\bibitem{kove:95}
J.~Kova{\v c}-Striko and K.~Veseli{\' c}.
\newblock Trace minimization and definiteness of symmetric pencils.
\newblock {\em Linear Algebra Appl.}, 216:139--158, 1995.

\bibitem{krps:14:indefinite}
Daniel Kressner, Marija~Milolo{\v z}a Pandur, and Meiyue Shao.
\newblock An indefinite variant of {LOBPCG} for definite matrix pencils.
\newblock {\em Numer. Alg.}, 66:681--703, 2014.

\bibitem{li:06c08}
Ren-Cang Li.
\newblock On {Meinardus'} examples for the conjugate gradient method.
\newblock {\em Math. Comp.}, 77(261):335--352, 2008.

\bibitem{li:13:LOBPCGcode}
Ren-Cang Li.
\newblock Lobpcghqep.m.
\newblock \url{http://www.uta.edu/faculty/rcli/G2S3/g2s3.html}, July 2013.
\newblock Programming assignment of the lecture for 4th {Gene Golub SIAM}
  Summer School.

\bibitem{lili:13}
Xin Liang and Ren-Cang Li.
\newblock Extensions of {W}ielandt's min-max principles for positive
  semi-definite pencils.
\newblock {\em Lin. Multilin. Alg.}, 62(8):1032--1048, 2014.

\bibitem{liangL2015hyperbolic}
Xin Liang and Ren-Cang Li.
\newblock The hyperbolic quadratic eigenvalue problem.
\newblock {\em Forum of Mathematics, Sigma}, 3(e13), 2015.
\newblock 93 pages, doi:10.1017/fms.2015.14.

\bibitem{lilb:13}
Xin Liang, Ren-Cang Li, and Zhaojun Bai.
\newblock Trace minimization principles for positive semi-definite pencils.
\newblock {\em Linear Algebra Appl.}, 438(7):3085--3106, 2013.

\bibitem{mark:88}
A.S. Markus.
\newblock {\em Introduction to the Spectral Theory of Polynomial Operator
  Pencils}.
\newblock Translations of mathematical monographs, vol. 71. AMS, Providence,
  RI, 1988.

\bibitem{nave:03}
I.~Naki{\' c} and K.~Veseli{\' c}.
\newblock {Wielandt} and {Ky-Fan} theorem for matrix pairs.
\newblock {\em Linear Algebra Appl.}, 369:77--93, 2003.

\bibitem{neym:01:geometric}
Klaus Neymeyr.
\newblock A geometric theory for preconditioned inverse iteration, i: Extrema
  of the {Rayleigh} quotient.
\newblock {\em Linear Algebra Appl.}, 332:61--85, 2001.

\bibitem{neym:12:geometric}
Klaus Neymeyr.
\newblock A geometric convergence theory for the preconditioned steepest
  descent iteration.
\newblock {\em SIAM J. Numer. Anal.}, 50:3188--3207, 2012.

\bibitem{neoz:11:convergence}
Klaus Neymeyr, E.~E. Ovtchinnikov, and Ming Zhou.
\newblock Convergence analysis of gradient iterations for the symmetric
  eigenvalue problem.
\newblock {\em SIAM J. Matrix Anal. Appl.}, 32:443--456, 2011.

\bibitem{ovtc:08:jacobi:I}
E.~E. Ovtchinnikov.
\newblock {J}acobi correction equation, line search, and conjugate gradients in
  {H}ermitian eigenvalue computation {I}: computing an extreme eigenvalue.
\newblock {\em SIAM J. Numer. Anal.}, 46(5):2567--2592, 2008.

\bibitem{ovtc:08:jacobi:II}
E.~E. Ovtchinnikov.
\newblock {J}acobi correction equation, line search, and conjugate gradients in
  {H}ermitian eigenvalue computation {II}: computing several extreme
  eigenvalues.
\newblock {\em SIAM J. Numer. Anal.}, 46(5):2593--2619, 2008.

\bibitem{samo:58}
B.~Samokish.
\newblock The steepest descent method for an eigenvalue problem with
  semi-bounded operators.
\newblock {\em Izv. Vyssh. Uchebn. Zaved. Mat.}, 5:105--114, 1958.
\newblock in Russian.

\bibitem{taka:65}
I.~Takahashi.
\newblock A note on the conjugate gradient method.
\newblock {\em Inform. Process. Japan}, 5:45--49, 1965.

\bibitem{vese:10}
Kre{\v s}imir Veseli{\' c}.
\newblock Note on interlacing for hyperbolic quadratic pencils.
\newblock In Jussi Behrndt, Karl-Heinz F{\" o}rster, and Carsten Trunk,
  editors, {\em Recent Advances in Operator Theory in Hilbert and Krein
  Spaces}, volume 198 of {\em Oper. Theory: Adv. Appl.}, pages 305--307.
  Birkh{\" a}user, Boston, 2010.

\bibitem{wangI1999matrix}
Song-Gui Wang and Wai-Cheung Ip.
\newblock A matrix version of the wielandt inequality and its applications to
  statistics.
\newblock {\em Linear Algebra Appl.}, 296:171--181, 1999.

\bibitem{weka:00}
Songbin Wei and Imin Kao.
\newblock Vibration analysis of wire and frequency response in the modern
  wiresaw manufacturing process.
\newblock {\em J. Sound and Vibration}, 231(5):1383--1395, 2000.

\end{thebibliography}
}

\end{document}